\theoremstyle{theorem}
\newtheorem{theorem}{Theorem}
\newtheorem{corollary}[theorem]{Corollary}
\newtheorem{prop}[theorem]{Proposition}
\newtheorem{lemma}[theorem]{Lemma}
\theoremstyle{definition}
\newtheorem{definition}[theorem]{Definition}
\newtheorem{remark}[theorem]{Remark}
\theoremstyle{definition}
\newtheorem*{Notation}{Notation}
\DeclareMathOperator{\inv}{inv}
\DeclareMathOperator{\sym}{Sym}
\DeclareMathOperator{\res}{Res}
\def\C{\mathbb{C}}
\def\eps{\varepsilon}
\numberwithin{theorem}{section}
\begin{document}
\date{}
\author{Konstantin M. Posadskiy}
\title[Continuous two-valued dynamical systems]{Continuous two-valued discrete-time dynamical systems and actions of two-valued groups}
\address{
    HSE University
}
\email{kmposadskiy@hse.ru}
\maketitle{}

\begin{abstract}
We study continuous 2-valued dynamical systems with discrete time (dynamics) on $\mathbb{C}$. The main question addressed is whether a 2-valued dynamics can be defined by the action of a 2-valued group. We construct a class of strongly invertible continuous 2-valued dynamics on $\mathbb{C}$ such that none of these dynamics can be given by the action of any 2-valued group. We also construct an example of a continuous 2-valued dynamics on $\mathbb{C}$ that is not strongly invertible but can be defined by the action of a 2-valued group.
\end{abstract}

\section{Introduction}

\subsection{Main definitions}\

\begin{definition}
    Let $S$ be a set and let $\sym^m(S)$ be the $m$-th symmetric power of $S$.
    A map $T: S \to \sym^m(S)$ is called an \textit{$m$-valued dynamical system with discrete time} (\textit{$m$-valued dynamics}) on the set $S$.
\end{definition}

For any $m$-valued dynamics $T$ on a set $S$ we can construct a directed graph $G$ with the set of vertices equal to $S$ and the multiset of edges containing a pair $(s_1, s_2) \in S^2$ as many times as $s_2$ lies in the multiset $T(s_1)$.
We call an $m$-valued dynamics $T$ \textit{weakly invertible} if for each vertex $s$ of $G$ there exists an incoming edge $(*, s)$; we call an $m$-valued dynamics $T$ \textit{strongly invertible} if for each vertex $s$ there exist exactly $m$ such edges.
We use square brackets to enumerate elements of a multiset.

\begin{remark}
    A map $f: S \longrightarrow S$ acts naturally on $\sym^m(S)$. A multiset $[s_1, \dots, s_n]$ maps to $[f(s_1), \dots, f(s_n)]$.
\end{remark}

The definition of a continuous $m$-valued dynamics is derived naturally.

\begin{definition}
    Let $S$ be a topological space. Then $S^m$ and therefore $\sym^m(S)$ also have a natural topological space structure. A continuous map $T: S \to \sym^m(S)$ is called a \textit{continuous $m$-valued dynamics}. 
    We denote the set of continuous $m$-valued dynamics on $S$ by $\mathcal{T}_m(S)$.
\end{definition}

The definitions of $m$-valued group and its action on a set were given by V.M. Buchstaber (see \cite{mval}). 
We repeat these definitions here.

An \text{$m$-valued multiplication} on a set $X$ is a map $$\mu: X \times X \to \sym^m(X)$$
Let us use the notation $\mu(x, y) = x * y$. We have the following natural generalizations of the standard axioms of group multiplication.

\textit{Associativity}: The multisets $[x * (y * z)]$ and $[(x * y) * z]$ consisting of $m^2$ elements are equal.

\textit{Unit}: An element $e \in X$ such that $e * x = x * e = [x, x, \dots, x]$ for all $x \in X$.

\textit{Inverse}: A map $\inv: X \to X$ such that $e \in \inv(x) * x$ and $e \in x * \inv(x)$ for all $x \in X$.

\begin{definition}
    The map $\mu : X \times X \to \sym^m(X)$ defines an \textit{$m$-valued group structure} $\mathcal{X} = (X, \mu, e, \inv)$ on $X$ if it is associative, has a unit and an inverse. In this case we simply say that $X$ is an $m$-valued group.
\end{definition}

\begin{definition}
    Let $X$ be a $m$-valued group. A subset $Y \subset X$ is called an \textit{$m$-valued subgroup of group $X$ generated by an element $a \in X$} if it is the minimal-inclusion subset with following properties:

    1) $a \in Y$

    2) $\forall b \in Y$ we have $\inv (b) \in Y$.

    3) For all $b, c \in Y$ the set $Y$ contains every element of the multiset $b*c$.

    An $m$-valued group $X$ is called \textit{single-generated with generator $a$}, if $X = Y$.
\end{definition}

The following definition is central to this paper. It connects the concepts of a multi-valued group and a multi-valued dynamics on a set.
\begin{definition} \label{definition:action}
    An $m$-valued group $A$ with unit $e$ and multiplication $\mu$ \textit{acts on a space} $S$ if there is a mapping $\nu: A \times S \to \sym^m(S)$ such that
    
    1) the two multisets $\nu(a_1, \nu(a_2, s))$ and $\nu(\mu(a_1, a_2), s)$ consisting of $m^2$ elements are equal for all $a_1, a_2 \in A$, $s \in S$
    
    2) $\nu(e, s) = [s, \dots, s]$ for all $s \in S$.

    We say that an $m$-valued dynamics $T \in \mathcal{T}_m(S)$ \textit{is defined by the action of a 2-valued group $A$ with an element $a$} if there exists an action $\nu$ of $A$ on $S$ such that for all $s \in S$ the multisets $T(s)$ and $\nu(a, s)$ are equal.
\end{definition}

\begin{remark}
    Note that an $m$-valued dynamics is given by the action of an $m$-valued group $A$ with an element $a$ if and only if it is also given by the action of the subgroup $\langle a \rangle \subset A$ generated by the element $a$. Therefore, it does not make any difference whether we consider the $m$-valued group $A$ or its subgroup $\langle a \rangle$. For the sake of simplicity, we consider the whole $m$-valued group $A$ in this paper.
\end{remark}

\subsection{Problem statement and results}\

The question of whether a multivalued dynamics can be defined by the action of a multivalued group is partly motivated by the problem of the growth in the number of images of a single point under iterations of a multivalued dynamics (see \cite{VES}). It is also related to the question of what should be properly understood as the integrability of a multivalued dynamics.

The further investigations into the integrability of multivalued dynamics have been carried out in the works \cite{B-G}, \cite{G-YA}, and \cite{B-V}.

A one-valued dynamics $T$ is a map from $S$ to $S$, so in this case:

    1) Any one-valued dynamics is defined by the action of the semigroup $\mathbb{Z}_{\geqslant 0}$.

    2) If a one-valued dynamics is invertible ($\forall y \; \exists ! \: x: T(x) = y)$, then it is defined by the action of the group $\mathbb{Z}$.

The question arises whether there are analogues of these statements for $m$-valued dynamics.

A. Gaifullin and P. Yagodovskii studied discrete dynamics in the paper \cite{G-YA}. They obtained a partial answer, namely, for an arbitrary strongly invertible $m$-valued dynamics, they described a method for constructing an $m$-valued group whose action defines this dynamics. The non strongly invertible case was not addressed, and it was assumed that in this case the $m$-valued dynamics could not be defined by the action of an $m$-valued group.
\begin{theorem} \label{theorem:irreversible}
    There exists a non strongly invertible continuous 2-valued dynamics such that this dynamics is defined by the action of some 2-valued group.
\end{theorem}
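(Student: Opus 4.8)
The plan is to exhibit one explicit example, namely the left regular (self-)action of the \emph{cosine} $2$-valued group on $\C$. The key conceptual point is that, although this group is itself a coset group, its regular action is \emph{not} carried out by a pair of honest self-maps of $\C$ (a group element acts by a genuinely two-valued correspondence), and this is exactly what lets the resulting dynamics fail strong invertibility at the ramification points of its two-valued graph. This is the phenomenon that the Gaifullin--Yagodovskii heuristic overlooks.

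Concretely, I would take $A = \C$ with the multiplication
\begin{equation*}
 x * y = \bigl[\, xy + \sqrt{(x^2-1)(y^2-1)},\ \ xy - \sqrt{(x^2-1)(y^2-1)}\,\bigr],
\end{equation*}
the two choices of sign making the multiset well defined. I would first recall that this is Buchstaber's coset $2$-valued group obtained from $\C^{*}$ and the involution $z \mapsto z^{-1}$ through the substitution $x = \tfrac12(z + z^{-1})$; in particular it is associative and has unit $e = 1$ and inversion $\inv = \mathrm{id}$, so the group axioms need not be reverified by hand (see \cite{mval}). Since every $2$-valued group acts on itself by left translation $\nu(b,s) = b * s$ — action axiom~(1) is associativity and axiom~(2) is the unit law — fixing any $a \in \C$ with $a \neq \pm 1$ produces a dynamics $T(s) := \nu(a,s) = a * s$ which is, by Definition~\ref{definition:action}, defined by the action of $A$ with the element $a$. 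Writing $T(s)$ as the multiset of roots of $t^2 - 2as\,t + (s^2 + a^2 - 1)$ exhibits $s \mapsto T(s)$ as a polynomial map $\C \to \sym^2(\C) \cong \C^2$, so $T \in \mathcal{T}_2(\C)$.

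The core of the argument, and the step I expect to demand the most care, is the computation of the in-degrees of the associated graph $G$. The correspondence carrying $T$ is the symmetric curve
\begin{equation*}
 C = \{(s,t) \in \C^2 : t^2 - 2as\,t + s^2 + a^2 - 1 = 0\},
\end{equation*}
and the number of incoming edges at a vertex $w$ equals $\sum_{s}\bigl(\text{multiplicity of } w \text{ in } T(s)\bigr)$, the sum being over those sources $s$ with $(s,w)\in C$. I would look at the vertex $w = 1$: setting $t = 1$ in the defining equation gives $(s - a)^2 = 0$, so the \emph{only} source is $s = a$, where $T(a) = a*a = [\,2a^2 - 1,\ 1\,]$ and $1$ occurs with multiplicity one because $a \neq \pm 1$. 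Hence the vertex $1$ has exactly one incoming edge, and $T$ is not strongly invertible. The subtle point to handle correctly is that the line $\{t = 1\}$ is \emph{tangent} to $C$ at $(a,1)$, so the two would-be sources collide into a single point contributing a single simple edge; one must count multiplicities of $w$ inside $T(s)$ (i.e.\ along the projection to the source coordinate) and must not mistake the length-two tangential intersection for two incoming edges.

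To round out the picture I would also record the complementary ramification: as a source $T$ doubles at $s = \pm 1$, where $T(1) = [\,a,a\,]$ and $T(-1) = [\,-a,-a\,]$ are genuine double values, so the edges $(1,a)$ and $(-1,-a)$ have multiplicity two and the vertices $a$ and $-a$ acquire in-degree three. Thus the in-degrees are not identically equal to $2$, which confirms robustly that $T$ is a continuous, non strongly invertible $2$-valued dynamics on $\C$ that is nevertheless defined by the action of the $2$-valued group $A$, proving the theorem.
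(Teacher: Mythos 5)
Your proposal is correct, but it proves the theorem through a different concrete example than the paper does. The paper takes the Buchstaber--Novikov $2$-valued group structure on $\mathbb{Z}_+$, namely $n*m = [n+m, |n-m|]$, lets it act on $\C$ by $\nu(n,z) = (n \pm \sqrt{z})^2$, verifies the action axiom by the direct computation $\nu(n*m,z) = \nu(n,\nu(m,z))$, and observes that $T(z) = (1 \pm \sqrt{z})^2$ is not strongly invertible because the point $0$ has only the single simple preimage $1$. You instead use the left regular action of the cosine coset group $\C^*/(z \sim z^{-1})$ on itself, $T(s) = a*s$ with $a \neq \pm 1$, and locate the failure of strong invertibility at the vertex $w = 1$, whose only incoming edge is the simple edge from $s = a$. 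The underlying phenomenon is identical in both examples: they are coset constructions (yours multiplicative, the paper's additive), and in each case the in-degree drops at a branch point of the symmetric correspondence, where the fibre over $w$ is a double root in $s$ while $w$ occurs only once in $T(s)$ --- a distinction you handle carefully and correctly, since the paper's definition counts multiplicity of $w$ in the multiset $T(s)$, not intersection multiplicity. What your route buys: the action axioms come for free (axiom (1) is associativity of the group, axiom (2) is the unit law), so no computation analogous to the paper's verification is needed, and you get a one-parameter family of examples. What the paper's route buys: the acting group is the minimal single-generated group $\mathbb{Z}_+$ rather than all of $\C$, and the formulas are somewhat simpler. One small inaccuracy in your closing remark: for $a = 0$ the vertices $a$ and $-a$ coincide and that vertex has in-degree four, not three; but this side computation plays no role in the argument, and your essential count at the vertex $1$ is valid for every $a \neq \pm 1$.
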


The construction described in \cite{G-YA} does not provide an answer to the question of whether a given continuous multivalued dynamics can be defined by the action of a multivalued group, even for strongly invertible continuous multivalued dynamics. This is because the action of the multivalued group obtained through this construction is generally not continuous. In Section 2 we investigate whether continuous 2-valued dynamics of the following form can be defined by the action of a 2-valued group:

Take a polynomial $$P(z, w) = w^m + q_{m-1}(z)w^{m-1} + \dots + q_0(z)$$ 
for some $q_{m-1}, \dots, q_0 \in \C[z]$.
If we fix $z$ the polynomial $P(z, w)$ becomes a polynomial of a single variable. Then we have an $m$-valued map such that $z$ maps to the multiset $[w_1, \dots, w_m]$ of roots of $P_z(w)$. This $m$-valued map is continuous.
We denote this class of continuous $m$-valued dynamics by $\mathcal{P}_m(\C)$.

If $m=2$, then the polynomial $P_z(w)$ has a form $w^2 + 2p_1(z)w + p_0(z)$. Then $z \mapsto [w_1, w_2]$, namely,
\begin{equation*}
    z \mapsto -p_1(z) \pm \sqrt{p_1^2(z) - p_0(z)}
\end{equation*}

The expression under the radical is a complex number; in this paper we denote the pair of complex numbers whose squares are equal to $c$ by $\pm \sqrt{c}$. In the paper we study non-degenerate 2-valued dynamics $T$:

\begin{definition}
    We call a 2-valued dynamics $T \in \mathcal{P}_2(\C)$ \textit{non-degenerate} if $T$ cannot be represented as a composition of a mapping $\C \to \C^2$ and the projection $\C^2 \to \sym^2(\C)$ and there exists a point $z \in \C$ such that the multiset $T(T(z))$ consists of four distinct elements.
\end{definition}

The main result of the second section of the paper is a necessary condition for a non-degenerate 2-valued dynamics $T \in \mathcal{P}_2(\C)$ to be defined by the action of a 2-valued group.

\begin{theorem} \label{theorem:criteria}
    Let $T$ be a non-degenerate 2-valued dynamics in $\mathcal{P}_2(\C)$: 
    $$T(z) = -p_1(z) \pm \sqrt{p_1^2(z) - p_0(z)}$$
    Then a necessary (but not sufficient) condition for $T$ to be defined by the action of a 2-valued group is that the polynomial $p_0$ be a perfect square.
\end{theorem}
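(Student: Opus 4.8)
The plan is to show that any action of a 2-valued group realizing $T$ must, in effect, arise from an ordinary (one-valued) dynamics on a double cover of $\C$ branched over $0$, and that the requirement ``$p_0$ is a perfect square'' is exactly the algebraic shadow of this covering structure. I would first record the elementary identity that tells me what to aim for. Writing $T(z)=[w_1,w_2]$, we have $w_1+w_2=-2p_1(z)$ and $w_1w_2=p_0(z)$. If $t$ is a square root of $z$ and the action is built on the cover $\pi(t)=t^2$ by a one-valued map $g$, so that $T(z)=[g(t)^2,g(-t)^2]$, then $p_0(z)=w_1w_2=(g(t)g(-t))^2$. Since $g(t)g(-t)$ is invariant under $t\mapsto -t$, it descends to a single-valued function $h$ of $z=t^2$, polynomial when $g$ is; hence $p_0=h^2$. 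The canonical 2-valued group $\C$ with $x*y=[(\sqrt x+\sqrt y)^2,(\sqrt x-\sqrt y)^2]$ and its self-action already realize this: there $p_1(z)=-(z+a)$ and $p_0(z)=(z-a)^2$. So the entire proof reduces to forcing the covering picture.

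To extract the cover from the abstract action, I would use the unit and inverse axioms to produce the underlying involution. From $e\in a*\inv(a)$ together with axiom (2) of Definition \ref{definition:action}, the multiset $\nu(a,\nu(\inv(a),z))$ contains $z$ with multiplicity two, and dually for $\nu(\inv(a),\nu(a,z))$. I would argue that, combined with non-degeneracy, this makes $T$ reversible along a sheet: there is a consistent partition of the four values of $T(T(z))$ into two pairs, one of which returns towards $z$. I would then identify the relevant square-root coordinate as the continuous choice of a value of $\sqrt{w_1}\cdot\sqrt{w_2}$ supplied by the group multiplication (in the canonical model this is the collapse of the sign ambiguity in $(\sqrt a+\sqrt z)(\sqrt a-\sqrt z)$). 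The output of this step is a single-valued lift $g$ on the cover $w\mapsto\sqrt w$ with $T(z)=[g(t)^2,g(-t)^2]$.

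The delicate point is global consistency over all of $\C$, not just locally. The two branches $w_1,w_2$ are interchanged by monodromy around the zeros of the discriminant $D=p_1^2-p_0$, and the four branches of $T\circ T$ pick up further monodromy around points where some $w_i$ meets a zero of $D$. I would verify that every such monodromy preserves the pairing coming from the cover and fixes the even product $g(t)g(-t)$: these operations amount to $w_1\leftrightarrow w_2$ together with $t\mapsto -t$, under which $g(t)g(-t)$ is invariant. Hence $g(t)g(-t)$ extends to a single-valued holomorphic, and then polynomial, function $h$ on $\C$, giving $p_0=h^2$. Equivalently, the zeros of $p_0$ are the points where one branch of $T$ passes through the branch point $0$ of the cover, and the covering structure forces them to occur with even multiplicity, which is again $p_0=h^2$.

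The step I expect to be hardest is the reconstruction above: passing from the bare axioms of a 2-valued group action to the concrete double-cover-with-one-valued-lift presentation, and in particular pinning the branch point of the cover at $0$, since it is precisely this normalization that singles out $p_0$ rather than a conjugate of it. Non-degeneracy should be essential here: it excludes the case where $T$ lifts to a single-valued map (so that there is no genuine cover) and the case where $T(T(z))$ degenerates, both of which would break the monodromy bookkeeping. I would also have to rule out ``exotic'' 2-valued groups acting on $\C$ that are not coset groups $G/\sigma$; I expect continuity of $\nu$ together with non-degeneracy to restrict the acting group enough that the coset/covering description applies, and once it does the perfect-square conclusion follows formally. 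This is why the statement is only \emph{necessary} and not sufficient: the covering construction constrains $p_0$ but imposes no analogous constraint forcing every such $T$ to integrate to a genuine action.
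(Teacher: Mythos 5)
Your plan is genuinely different from the paper's argument, but it has a gap at exactly the step you yourself flag as the hardest, and that step is not a technicality: it is the entire content of the theorem. Your argument runs: (i) any continuous action of a 2-valued group realizing $T$ arises from the double cover $t \mapsto t^2 = z$ with a one-valued lift $g$, so that $T(z) = [g(t)^2, g(-t)^2]$; (ii) then $p_0(z) = w_1 w_2 = \left(g(t)g(-t)\right)^2$ with $g(t)g(-t)$ even in $t$, hence $p_0$ is a perfect square. Step (ii) is fine, but step (i) is asserted, not proved. Definition \ref{definition:action} allows $A$ to be an arbitrary abstract 2-valued group and $\nu$ an arbitrary continuous action; nothing in the unit, inverse and associativity axioms produces a lift $g$, singles out the deck transformation $t \mapsto -t$, or pins the branch point of a cover at $z = 0$. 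Your stated intention to ``rule out exotic 2-valued groups'' using continuity and non-degeneracy is an expectation, not an argument, and there is no classification of continuous 2-valued group actions on $\C$ to fall back on.

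Moreover, the naive form of (i) --- that the axioms alone force the additive covering picture --- is false, so any correct version must use non-degeneracy in an essential structural way, which your sketch never does. Concretely, take the coset group $\C^*/(u \sim u^{-1})$ (a standard construction from \cite{mval}) with coordinate $z = u + u^{-1}$, acting on itself: $\nu(a, z)$ is the root multiset of $w^2 - azw + (z^2 + a^2 - 4)$, a dynamics in $\mathcal{P}_2(\C)$ with $p_0(z) = z^2 + a^2 - 4$, which is \emph{not} a perfect square for $a \neq \pm 2$. Here the relevant cover is $u \mapsto u + u^{-1}$, branched over $\pm 2$, not over $0$. This does not contradict the theorem only because in this group $e \in a * a$ for every $a$, so $T \circ T(z)$ always contains $[z, z]$ and the dynamics violates the second non-degeneracy requirement. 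But it shows that the covering structure, when one exists, is not canonical, and that ``the action comes from a double cover'' does not formally yield ``$p_0$ is a perfect square'': it depends on \emph{which} cover, and excluding the alternatives is precisely the difficulty. The paper circumvents the reconstruction problem entirely: the only consequence of the group action it uses is Proposition \ref{proposition:4div2} (writing $a*a = [a_1, a_2]$, the 4-valued dynamics $T \circ T$ splits into the two continuous 2-valued dynamics $\nu(a_i, \cdot)$), after which the conclusion is forced by monodromy --- near an odd-multiplicity root $z_1$ of $p_0$ any splitting must have type 1 (Lemma \ref{lemma:z1_area}), near the normalized odd-multiplicity root of the discriminant it must have type 2 (Lemma \ref{lemma:0_area}), while the type is locally constant wherever the four images are distinct (Lemma \ref{lemma:collision}), giving a contradiction along a connecting path. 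To rescue your route you would first need a structure theorem for continuous 2-valued group actions defining non-degenerate dynamics on $\C$; that statement is not weaker than the theorem you are trying to prove.
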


This theorem provides a source of 2-valued dynamics that are not defined by the action of a 2-valued group. Moreover, unlike the discrete case almost no continuous 2-valued dynamics are defined by the action of a 2-valued group. Among these dynamics many are strongly invertible.

\begin{corollary}
    If the polynomial $p_1$ is linear and $p_0$ is a polynomial of degree 2 with distinct roots, then the corresponding 2-valued dynamics from $\mathcal{P}_2(\C)$ defined by the polynomial $P(z, w)$ is strongly invertible but cannot be defined by the action of a 2-valued group.
\end{corollary}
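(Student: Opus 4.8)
The plan is to treat the two assertions separately: strong invertibility is a direct degree computation, while the failure to arise from a $2$-valued group action will be deduced from Theorem~\ref{theorem:criteria} once non-degeneracy is established. The second assertion is the substantial one, and the whole difficulty is concentrated in verifying the hypotheses of that theorem rather than in producing any new argument.

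For strong invertibility I would fix a target value $w$ and count its preimages, that is, the solutions $z$ of $w \in T(z)$, which are exactly the roots in $z$ of $P(z,w) = w^2 + 2p_1(z)w + p_0(z) = 0$. Writing $p_1(z) = a_1 z + a_0$ with $a_1 \neq 0$ and $p_0(z) = c_2 z^2 + c_1 z + c_0$ with $c_2 \neq 0$, the summand $w^2$ has degree $0$ in $z$, the summand $2p_1(z)w$ has degree $1$, and $p_0(z)$ has degree $2$; hence $P(\,\cdot\,,w)$ is a quadratic in $z$ whose leading coefficient $c_2$ is nonzero and independent of $w$. Consequently every $w$ has exactly two preimages counted with multiplicity, so each vertex has exactly $m=2$ incoming edges and $T$ is strongly invertible. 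The only inputs are $\deg p_1 = 1 < 2 = \deg p_0$ and $c_2 \neq 0$.

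For the second assertion I would invoke Theorem~\ref{theorem:criteria}: if $T$ is non-degenerate and is defined by the action of a $2$-valued group, then $p_0$ must be a perfect square. Since $p_0$ has degree $2$ with distinct roots, it factors as $c_2(z-r_1)(z-r_2)$ with $r_1 \neq r_2$ and is therefore not a perfect square, so the theorem forbids a group action \emph{provided} $T$ is non-degenerate. It thus remains to check non-degeneracy, which has two parts: (a) $T$ does not split, equivalently the discriminant $p_1^2 - p_0$ is not a perfect square in $\C[z]$; and (b) there is a point $z$ for which $T(T(z))$ consists of four distinct values. For (b) I would compute the four-valued second iterate and note that the quartic whose roots are its values has nonvanishing discriminant for generic $z$, so the required point exists; this is a finite, if slightly tedious, algebraic check.

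The main obstacle is condition (a), which is \emph{not} automatic under the stated hypotheses. For instance, taking $p_1(z) = 2z$ and $p_0(z) = 3z^2 + 2z - 1$ (degree $2$, roots $1/3$ and $-1$, hence distinct) gives $p_1^2 - p_0 = (z-1)^2$, so $T(z) = [-z-1,\,-3z+1]$ splits into two affine maps and is degenerate, placing it outside the scope of Theorem~\ref{theorem:criteria}. The plan is therefore either to read the corollary as restricted to the non-split case in which $p_1^2 - p_0$ is not a perfect square, where the argument above goes through verbatim, or to dispose of the split case by a separate direct analysis of the resulting pair of affine maps. Establishing that we are genuinely in the regime where Theorem~\ref{theorem:criteria} applies, i.e. ruling out a perfect-square discriminant, is the step I expect to require the most care.
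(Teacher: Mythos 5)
The paper never writes out a proof of this corollary; it is meant to follow from Theorem \ref{theorem:criteria} plus a preimage count, which is exactly your route, so the whole question is whether the hypotheses actually put you inside Theorem \ref{theorem:criteria}. Here your worry is not only legitimate --- it is fatal to the statement as written, and your ``Plan B'' cannot work. The split case cannot be disposed of by a direct analysis, because every split dynamics $T(z)=[f(z),g(z)]$ with $f,g$ invertible affine maps \emph{is} defined by the action of a $2$-valued group. Take the free group $F_2=\langle u,v\rangle$, the involution $\sigma\in\mathrm{Aut}(F_2)$ swapping $u$ and $v$, and the homomorphism $\rho\colon F_2\to\mathrm{Aff}(\C)$ with $\rho(u)=f$, $\rho(v)=g$. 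Buchstaber's coset construction makes the orbit set $F_2/\sigma$ a $2$-valued group with $[x]*[y]=\bigl[[xy],[x\sigma(y)]\bigr]$, and $\nu([x],z)=[\rho(x)(z),\rho(\sigma(x))(z)]$ is an action in the sense of Definition \ref{definition:action} (each $\nu([x],\cdot)$ is even continuous), with $\nu([u],\cdot)=T$. Applied to your own example $p_1=2z$, $p_0=3z^2+2z-1$, $T(z)=[-z-1,-3z+1]$, this shows the corollary is \emph{false} as literally stated: your example satisfies its hypotheses yet is defined by a $2$-valued group action. The only tenable reading is your first option --- the corollary must carry the non-degeneracy hypothesis of Theorem \ref{theorem:criteria}, as the paper implicitly assumes throughout Section 2.

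Restricting to the non-split case is still not enough, because the other half of non-degeneracy --- your condition (b) --- can also fail under the hypotheses, so your claim that it holds ``for generic $z$'' and is only a tedious check is itself wrong for some members of the family. If $P(z,w)$ is symmetric in $z$ and $w$ (under the corollary's hypotheses this means $p_0(z)=z^2+2bz+e$ with $b$ the constant term of $p_1$), then $w\in T(z)\iff z\in T(w)$, so $T(T(z))$ contains $z$ with multiplicity two for \emph{every} $z$ and no point has four distinct second images. For instance $p_1=2z$, $p_0=z^2-1$ gives the non-split dynamics $T(z)=-2z\pm\sqrt{3z^2+1}$, which satisfies the hypotheses but is degenerate, hence again outside Theorem \ref{theorem:criteria}; such Vieta-type symmetric dynamics are precisely the ones studied in \cite{B-V}, and at least one of them (the example of Section 3) does admit a group action. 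So (b) is a genuine hypothesis, not a verification.

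Finally, the strong invertibility half is also flawed, though the flaw is arguably the paper's as much as yours: you count preimages of $w$ with their multiplicity as roots in $z$ of $P(\cdot,w)$, whereas the paper's definition weights the edge $(z,w)$ by the multiplicity of $w$ in the multiset $T(z)$. These counts disagree at tangency points of the conic $\{P=0\}$. For $p_1=z$, $p_0=2z^2-1$ (hypotheses satisfied, non-split), the only $z$ with $\sqrt2\in T(z)$ is $z=-\sqrt2/2$, and $T(-\sqrt2/2)=[0,\sqrt2]$, so the vertex $\sqrt2$ has exactly one incoming edge, while the vertex $-1$ has three (since $T(1)=[-1,-1]$ and $T(0)=[1,-1]$). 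A non-split conic in this family is irreducible, hence smooth, so such defective vertices always exist: under the paper's literal definition the non-split members are never strongly invertible. Your two-to-two correspondence count is surely what the author intends ``strongly invertible'' to mean, but it is not what the stated definition says, and your degree argument does not establish the property as defined.
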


\subsection{Acknowledgements}\ 

The author is deeply grateful to his scientific advisor A. A. Gaifullin for useful discussions, constant attention, and invaluable advice that significantly improved this work. The author also sincerely thanks M. T. Urmanov and Yu. V. Chekanov for their thoughtful suggestions and contributions to the paper.

\section{Two-valued dynamics that cannot be defined by the action of a two-valued group}

In this section we study non-degenerate 2-valued dynamics $T$ of the form
$$z \mapsto -p_1(z) \pm \sqrt{p_1^2(z) - p_0(z)}$$

By applying a conjugation by the shift $z \mapsto z + a$, we can ensure that the expression under the radical has a root of odd multiplicity at $0$. If the expression under the radical is a perfect square, then the dynamics is degenerate.

\subsection{Double application of the dynamics $T$}\

\begin{lemma}
$T \circ T$ as a 4-valued dynamics is defined by a polynomial of 4-th degree: $z \mapsto [v_1, v_2, v_3, v_4]$, which is the multiset of the roots of some polynomial $$v^4 + q_3(z)v^3 + q_2(z)v^2 + q_1(z)v + q_0(z).$$
\end{lemma}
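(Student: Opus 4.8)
The plan is to realize $T \circ T$ explicitly as the root multiset of a product of two quadratics, and then to show that the coefficients of that product — though they are built from the ``invisible'' roots $w_1, w_2$ of the inner application — are in fact polynomials in $z$, via a symmetry argument.

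First I would write $T(z) = [w_1, w_2]$, where $w_1, w_2$ are the roots of $P_z(w) = w^2 + 2p_1(z)w + p_0(z)$, so that $w_1 + w_2 = -2p_1(z)$ and $w_1 w_2 = p_0(z)$. Applying $T$ once more to each of $w_1$ and $w_2$, the multiset $T(T(z))$ is the union of the root multisets of $P_{w_1}(v) = v^2 + 2p_1(w_1)v + p_0(w_1)$ and $P_{w_2}(v) = v^2 + 2p_1(w_2)v + p_0(w_2)$. Since the roots of a product of polynomials form, as a multiset, the union of the roots of the factors, $T(T(z))$ is exactly the root multiset of
$$Q_z(v) := P_{w_1}(v)\cdot P_{w_2}(v),$$
which is a monic polynomial of degree $4$ in $v$. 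It then remains to prove that the four coefficients $q_3, q_2, q_1, q_0$ of $Q_z$ lie in $\C[z]$.

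The key observation is the following. Expanding the product, each coefficient of $Q_z(v)$ is a polynomial expression in $p_1(w_1), p_1(w_2), p_0(w_1), p_0(w_2)$, hence a polynomial in $w_1$ and $w_2$. Moreover, interchanging $w_1$ and $w_2$ merely swaps the two factors $P_{w_1}$ and $P_{w_2}$ and so leaves $Q_z$ unchanged; therefore every coefficient is a \emph{symmetric} polynomial in $w_1, w_2$. By the fundamental theorem of symmetric polynomials, each such coefficient is a polynomial (with complex coefficients) in the elementary symmetric functions $e_1 = w_1 + w_2$ and $e_2 = w_1 w_2$. Substituting $e_1 = -2p_1(z)$ and $e_2 = p_0(z)$, which are themselves elements of $\C[z]$, I would conclude that $q_3, q_2, q_1, q_0 \in \C[z]$, proving the lemma.

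The argument is largely formal, and I do not expect a serious obstacle; the one point requiring care is the reduction from ``polynomial in $w_1, w_2$'' to ``polynomial in $z$.'' This is precisely where symmetry is essential: the individual roots $w_{1,2} = -p_1(z) \pm \sqrt{p_1^2(z) - p_0(z)}$ are neither polynomial nor even single-valued in $z$, so the coefficients become polynomial in $z$ only after invoking their symmetric dependence on $w_1, w_2$. For completeness one may record the explicit coefficients, such as $q_3 = 2\bigl(p_1(w_1) + p_1(w_2)\bigr)$ and $q_0 = p_0(w_1)p_0(w_2)$, and rewrite symmetric combinations like $p_1(w_1) + p_1(w_2)$ and $p_1(w_1)p_1(w_2)$ in terms of $e_1, e_2$; but the symmetric-function principle already guarantees the result without carrying out these computations.
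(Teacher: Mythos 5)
Your proof is correct, and it is in substance the same elimination-of-$w$ strategy as the paper's, but carried out by different means, so a comparison is worthwhile. The paper disposes of the lemma in one line by taking the resultant in $w$ of the system $v^2 + 2p_1(w)v + p_0(w) = 0$, $w^2 + 2p_1(z)w + p_0(z) = 0$: polynomiality in $z$ and $v$ is then immediate because the resultant is a determinant (of the Sylvester matrix) in the coefficients. Your polynomial $Q_z(v) = P_{w_1}(v)\,P_{w_2}(v)$ is exactly that resultant — for monic $P_z$ one has $\res_w\bigl(P_z(w),\, Q(w,v)\bigr) = Q(w_1,v)\,Q(w_2,v)$ — so the two arguments produce the same degree-$4$ polynomial; you simply certify that its coefficients lie in $\C[z]$ via the fundamental theorem of symmetric polynomials rather than via the determinant formula. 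What your route buys: it is self-contained (no resultant theory invoked), and it makes the \emph{multiset} claim transparent, since the root multiset of $P_{w_1}P_{w_2}$ is by construction $T(w_1) \cup T(w_2)$ with correct multiplicities, whereas a bare appeal to ``the resultant vanishes iff the system has a common solution'' would only identify the images as a set and would still need the product formula to account for multiplicity. What the paper's route buys: brevity, and an explicit determinantal expression for $q_0, \dots, q_3$, which is convenient for the computations that follow (e.g., the analysis of $\res(P_z, P_z')$ used to show that $T \circ T$ has four distinct values away from finitely many points).
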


The proof immediately follows by eliminating $w$ from the system of equations
$$v^2 + 2p_1(w)v + p_0(w) = 0$$
$$w^2 + 2p_1(z)w + p_0(z) = 0$$
using the resultant.

The polynomial $P_z(v) = v^4 + q_3(z)v^3 + q_2(z)v^2 + q_1(z)v + q_0(z)$ has roots of multiplicity greater then 1 if and only if $\res (P_z, P'_z) = 0$. The resultant $\res(P_z, P'_z)$ is a polynomial of $z$, therefore the set of points $z$ that map to four distinct points is either empty or coincides with $\C$ minus a finite set of points. This implies the following proposition:

\begin{prop} \label{lemma:4seasons}
    If the dynamics $T$ is non-degenerate, then $T(T(z))$ consists of four distinct points for all $z \in \C$ except for a finite number of points.
\end{prop}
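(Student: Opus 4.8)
The plan is to build on the lemma just stated, which says that $T \circ T$, viewed as a $4$-valued dynamics, sends $z$ to the multiset of roots of a polynomial
$$P_z(v) = v^4 + q_3(z)v^3 + q_2(z)v^2 + q_1(z)v + q_0(z)$$
whose coefficients $q_i$ are themselves polynomials in $z$. The whole argument is a discriminant (resultant) computation, so the first step is to recall the standard fact that a univariate polynomial has a repeated root if and only if its resultant with its derivative vanishes; hence for a fixed $z$, the multiset $T(T(z))$ consists of four distinct points exactly when $\res(P_z, P'_z) \neq 0$.

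Next I would observe that, since each coefficient $q_i(z)$ is a polynomial in $z$ and the resultant $\res(P_z, P'_z)$ is a fixed polynomial expression in the $q_i$ (the Sylvester determinant of $P_z$ and $P'_z$), the function $D(z) := \res(P_z, P'_z)$ is itself a polynomial in $z$. This is the key structural point: $D$ is a single polynomial in one variable $z$, so by the fundamental theorem of algebra it either vanishes identically or has only finitely many roots. In the latter case the set of $z$ for which $T(T(z))$ has four distinct points is all of $\C$ minus the finite zero-set of $D$, which is exactly the claimed conclusion.

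It therefore remains to rule out the case $D \equiv 0$. Here is where the non-degeneracy hypothesis enters: by definition of non-degeneracy there exists at least one point $z_0 \in \C$ with $T(T(z_0))$ consisting of four distinct elements, so $D(z_0) \neq 0$, and thus $D$ cannot be the zero polynomial. This immediately places us in the finite-root case and gives the proposition. I would phrase this as the dichotomy: the set $\{z : T(T(z)) \text{ has four distinct points}\}$ is either empty (if $D \equiv 0$) or the complement of a finite set (if $D \not\equiv 0$), and non-degeneracy forces the second alternative.

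The only genuine subtlety — and the step I would flag as the main thing to get right — is confirming that the $q_i(z)$ really are polynomials in $z$, so that $D(z)$ is genuinely polynomial rather than merely continuous or algebraic. This is precisely what the preceding lemma supplies, since it obtains the $q_i$ by eliminating $w$ via a resultant of two polynomials whose coefficients are polynomial in $z$; the resultant of polynomials with polynomial coefficients has polynomial coefficients, so each $q_i \in \C[z]$. Granting the lemma, the argument is routine and there is no real obstacle; the entire content is the observation that ``resultant of a polynomial family is a polynomial in the parameter, hence has finitely many zeros unless identically zero,'' combined with non-degeneracy to exclude the identically-zero case.
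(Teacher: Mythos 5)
Your proof is correct and follows essentially the same route as the paper: the paper likewise notes that $P_z$ has a repeated root iff $\res(P_z, P'_z) = 0$, that this resultant is a polynomial in $z$, and that the set of points with four distinct images is therefore either empty or cofinite, with non-degeneracy ruling out the empty case. Your additional remark that the $q_i$ are genuinely polynomials (coming from the elimination resultant in the preceding lemma) is the same justification the paper relies on implicitly.
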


\subsection{Images of simple closed curves}\

\begin{Notation} \label{notation}
Denote by $z_1$ an arbitrary root of the polynomial $p_0(z)$. Then the dynamics $T$ takes $z_1$ to the pair $[0, -2p_1(z_1)]$. Also, since $0$ is the root of $p_1^2(z) - p_0(z)$, it follows that the two images of $0$ coinside. Let us introduce the following notation (see the figure below):

\begin{align*}
T(z_1) &= [0, z_2],\\
T(0) &= [z_0, z_0],\\
T(z_0) &= [z_3, z'_3],\\
T(z_2) &= [z_4, z'_4]
\end{align*}

\[
\begin{tikzcd}
    & & z_4 \\
    & z_2 \ar[ru] \ar[r] & z'_4 \\
    z_1 \ar[ru] \ar[rd] \\
    & 0 \ar[r, shift left=1] \ar[r, shift right=1] & z_0 \ar[r] \ar[rd] & z_3 \\
    & & & z'_3
\end{tikzcd}
\]
\end{Notation}

\begin{remark}
    Some of the points $z_1, 0, z_2, z_3, z'_3, z_4, z'_4$ may coincide.
\end{remark}

We study 2-valued dynamics by examining the images of simple closed curves around different points in their small neighborhoods. Outside the diagonal, the projection
$$\C^2 \to \sym^2(\C)$$
is a two-sheeted covering of the set $\sym^2(\C) \setminus \{[x, x] \; | \; x \in \C\}$. We use the proposition that follows from path lifting property:
\begin{prop} \label{statement:branches}
    Let $\gamma$ be a path on $\C$ such that $T(\gamma)$ doesn't contain pairs of the form $[x,x]$. Then the action of the 2-valued dynamics $T$ on $\gamma$ has two continuous branches.
\end{prop}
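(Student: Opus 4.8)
The plan is to reduce the statement to the standard path-lifting property of covering spaces, using the covering $\pi\colon \C^2 \to \sym^2(\C)$ recalled just above. Write $\widetilde{\Delta} = \{(x,x) : x \in \C\} \subset \C^2$ for the diagonal and $\Delta = \pi(\widetilde{\Delta}) = \{[x,x] : x \in \C\} \subset \sym^2(\C)$ for its image. First I would form the composition $F = T \circ \gamma$, where $\gamma\colon [0,1] \to \C$ denotes the given path. Since $T$ is continuous and $\gamma$ is continuous, $F\colon [0,1] \to \sym^2(\C)$ is continuous, and the hypothesis that $T(\gamma)$ contains no pair of the form $[x,x]$ says precisely that $F(t) \in \sym^2(\C) \setminus \Delta$ for every $t \in [0,1]$.

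Next I would invoke the covering structure: as recalled just above, the restriction $\pi\colon \C^2 \setminus \widetilde{\Delta} \to \sym^2(\C) \setminus \Delta$ is a two-sheeted covering (the transposition action of $S_2$ is free off the diagonal). The path-lifting property of coverings then produces a continuous lift $\widetilde{F}\colon [0,1] \to \C^2 \setminus \widetilde{\Delta}$ with $\pi \circ \widetilde{F} = F$, uniquely determined once one fixes $\widetilde{F}(0)$ among the two preimages of $F(0)$; the two choices differ by the coordinate-swapping deck transformation.

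Finally I would extract the branches. Writing $\widetilde{F}(t) = (b_1(t), b_2(t))$ and composing with the continuous coordinate projections $\C^2 \to \C$ yields two continuous functions $b_1, b_2\colon [0,1] \to \C$ satisfying $[b_1(t), b_2(t)] = F(t) = T(\gamma(t))$ for all $t$. These are the two continuous branches of the action of $T$ along $\gamma$, and the two admissible initial lifts correspond exactly to interchanging $b_1$ and $b_2$.

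The argument is routine; the only point requiring care is the applicability of path lifting, which rests on two facts — the continuity of $F$ and the covering property of $\pi$ away from the diagonal. The former is immediate from continuity of $T$ and $\gamma$, and the latter is exactly the statement recalled before the proposition, so the main obstacle reduces to verifying that $F$ genuinely avoids $\Delta$, which is guaranteed by the hypothesis on $T(\gamma)$.
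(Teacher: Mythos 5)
Your proposal is correct and follows exactly the paper's intended argument: the paper derives this proposition directly from the path-lifting property of the two-sheeted covering $\C^2 \setminus \{(x,x)\} \to \sym^2(\C) \setminus \{[x,x]\}$, which is precisely the reduction you carry out. Your write-up merely fills in the routine details (forming $F = T \circ \gamma$, lifting it, and projecting to coordinates) that the paper leaves implicit.
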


The set of points whose images under the action of $T$ lie on the diagonal is finite, and thus all such points are isolated. Therefore, it follows from Proposition \ref{statement:branches} that the image of a simple closed curve around any point in a small neighborhood of this point under the action of a non-degenerate 2-valued dynamics is either a pair of closed paths or a pair of paths where the end of each path is the beginning of the other.

Denote a simple closed curve around $z_1$ in a small neigbourhood of this point by $\gamma_1$.

Recall that $z_1$ is a root of $p_0(z)$.

\begin{prop} \label{lemma:gamma1}
    If $z_1$ is a root of the polynomial $p_0$ of odd multiplicity, then one of the following two situations holds:

    1) the image of $\gamma_1$ under the action of the dynamics $T$ is a pair of closed paths, at least one of these paths makes an odd number of turns around zero

    2) the image of $\gamma_1$ is a closed curve. This curve makes an odd number of turns around zero, the pair of images of a point lying on $\gamma_1$ is swapped when this point traverses the curve $\gamma_1$ once.
\end{prop}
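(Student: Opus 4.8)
The plan is to reduce everything to a single fact about winding numbers, using Vieta's formula for the product of the two image values. Write $T(z)=[w_1(z),w_2(z)]$ for the two roots of $P_z(w)=w^2+2p_1(z)w+p_0(z)$, so that $w_1(z)\,w_2(z)=p_0(z)$. I would choose the loop $\gamma_1$ so small that the only root of $p_0$ it encloses is $z_1$ and that it meets no root of $p_0$ or of $p_1^2-p_0$. Since $w_i(z)=0$ forces $P_z(0)=p_0(z)=0$, neither branch meets $0$ along $\gamma_1$, so the number of turns of each image path around $0$ is well defined; and since $\gamma_1$ avoids the zeros of $p_1^2-p_0$, Proposition~\ref{statement:branches} applies, so the image is either a pair of closed paths or a single closed path. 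Throughout, let $\operatorname{wind}(C)$ denote the number of turns of a closed path $C$ around $0$. The one input I would record first is that, by the argument principle, $\operatorname{wind}\bigl(p_0(\gamma_1)\bigr)$ equals the multiplicity of $z_1$ as a root of $p_0$, which is odd by hypothesis; call it $\ell$.

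In the first case (no swap) the two branches $w_1,w_2$ are single-valued along $\gamma_1$ and the image is a pair of closed paths. Then $\operatorname{wind}(w_1\gamma_1)+\operatorname{wind}(w_2\gamma_1)=\operatorname{wind}(w_1 w_2(\gamma_1))=\operatorname{wind}(p_0(\gamma_1))=\ell$, which is odd, so at least one of the two closed paths winds an odd number of times around $0$. This is alternative 1) of the statement.

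In the second case (swap), Proposition~\ref{statement:branches} with the path-lifting property tells us that continuing $w_1$ once around $\gamma_1$ ends at $w_2$, so the two image paths join into a single closed curve $\Gamma$ and the two images are swapped after one traversal, as asserted. To compute $\operatorname{wind}(\Gamma)$ I would traverse $\gamma_1$ twice: the analytic continuation of $w_1$ then returns to itself, sweeping out all of $\Gamma$ exactly once, and likewise for $w_2$, so over the double loop each of $w_1,w_2$ contributes $\operatorname{wind}(\Gamma)$. On the other hand, over the double loop the product $w_1 w_2=p_0$ winds $2\ell$ times. Equating $2\operatorname{wind}(\Gamma)=2\ell$ gives $\operatorname{wind}(\Gamma)=\ell$, which is odd, establishing alternative 2).

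The main obstacle is the factor-of-two bookkeeping in the swap case: one must pass to the double traversal of $\gamma_1$ both to make the branch single-valued and to verify that $w_1$ and $w_2$ sweep out the \emph{same} closed curve $\Gamma$ with equal winding. Once this is set up, the identity $w_1 w_2=p_0$ does all the work and no local expansion of the radical $\sqrt{p_1^2-p_0}$ is needed; in particular the argument is uniform and avoids separating the subcases $p_1(z_1)=0$ and $p_1(z_1)\neq 0$, where $z_1$ is or is not a branch point of $T$.
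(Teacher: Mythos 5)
Your proof is correct, and it takes a genuinely different route from the paper's. The paper's proof splits into cases according to whether $z_1$ is a root of $p_1$ (i.e.\ whether the two images of $z_1$ coincide): when $p_1(z_1)\neq 0$ it uses the product identity $w_1w_2=p_0$ to count turns of the branch near $0$; when $p_1(z_1)=0$ it writes the dynamics locally as $(z-z_1)^a\tilde p_1(z)\pm\sqrt{(z-z_1)^{2a}\tilde p_1(z)^2-(z-z_1)^b\tilde p_0(z)}$ and compares $2a$ with $b$ to decide whether the branches swap, reading off the winding from this expansion. You instead take the swap/no-swap dichotomy itself (available for free from Proposition \ref{statement:branches}) as the case division and treat both cases uniformly via $w_1w_2=p_0$ and the argument principle; your double-traversal bookkeeping in the swap case, giving $2\operatorname{wind}(\Gamma)=2\ell$, is a clean and fully rigorous justification of the turn count, a point the paper asserts rather tersely from the local expansion. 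What you lose relative to the paper is the finer information about \emph{which} alternative occurs: the paper's computation shows the swap happens exactly when $p_1(z_1)=0$ and $2a>b$, which is not needed for the disjunctive conclusion but is what the local analysis buys. One hypothesis you use silently is that $p_1^2-p_0\not\equiv 0$, so that $\gamma_1$ can be chosen to avoid its finitely many zeros; this is guaranteed by the standing non-degeneracy assumption of the section (a degenerate perfect-square discriminant would make $T$ a doubled one-valued map), and it would be worth one sentence to say so.
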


\begin{proof}
If $z$ is not a root of $p_1$, then $\gamma_1$ is mapped to a pair of closed paths near $0$ and near $z_2 = -2p_1(z)$ under a single application of the dynamics. We denote these paths by $\omega$ and $\gamma_2$ respectively. Since $\gamma_2$ lies in a neighborhood of the point $z_2$ and thus turns around $0$ zero times, it follows that $\omega$ turns around $0$ the same number of times as the image of $\gamma_1$ under the mapping
\begin{equation} \label{p_0}
    z \mapsto \left(-p_1(z) + \sqrt{p_1^2(z) - p_0(z)}\right)\left(-p_1(z) - \sqrt{p_1^2(z) - p_0(z)}\right) = p_0(z)
\end{equation}
Therefore the number of turns $\omega$ makes around zero equals the multiplicity of $z_1$ as a root of the polynomial $p_0$ and this multiplicity is odd.

If $z_1$ is a root of $p_1$, then $T \circ T (z_1) = [0, 0]$. The dynamics $T$ in this case takes the form
$$(z-z_1)^a \tilde p_1(z) \pm \sqrt{(z-z_1)^{2a} \tilde p_1(z)^2 - (z-z_1)^b \tilde p_0(z)}$$
We know that $b$ is odd, because the root $z_1$ of the polynomial $p_0$ has odd multiplicity. Therefore there are two cases.
If $2a > b$, then $z_1$ is a root of odd multiplicity of 
$$(z-z_1)^{2a} \tilde p_1(z)^2 - (z-z_1)^b \tilde p_0(z)$$
When the point $g$ traverses $\gamma_1$, the element $p_1(g)$ returns to its original position, while $\sqrt{p_1^2(z) - p_0(z)}$ changes sign. Therefore, the image of $\gamma_1$ in this case is a curve that makes $b$ turns around zero, and the two images of a point lying on $\gamma_1$ are swapped when $\gamma_1$ is traversed once.
If $2a < b$, then both images of a point lying on $\gamma_1$ return to their original positions when this point traverses $\gamma_1$. This means that the image of $\gamma_1$ is a pair of curves in a neighborhood of zero.
It follows from equality \eqref{p_0} that these two curves together make the same number of turns around zero as $p_0$, namely $b$ turns. Since $b$ is odd, it follows that one of these curves makes an odd number of turns around zero.
\end{proof}

Now consider a closed curve $\omega$ in a neighborhood of 0, looping around 0 an odd number $d$ of times. Fix a point $g_1$ on $\omega$. Denote the images of $g_1$ under the action of $T$ by $g_{11}, g_{12}$.
\begin{prop} \label{lemma:omega}
    When the point $g_1$ traverses the curve $\omega$, its images $g_{11}, g_{12}$ under the action of $T$ swap.
\end{prop}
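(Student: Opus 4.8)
The plan is to reduce the statement to a monodromy computation for the square root $\sqrt{p_1^2(z) - p_0(z)}$. Since $-p_1(z)$ is a single-valued polynomial in $z$, it returns to its original value after $g_1$ traverses the closed curve $\omega$ once. Hence the two images $g_{11}, g_{12} = -p_1(g_1) \pm \sqrt{p_1^2(g_1) - p_0(g_1)}$ are interchanged precisely when the branch of the square root that we track changes sign along $\omega$. By Proposition \ref{statement:branches}, as long as $\omega$ avoids the zeros of $p_1^2 - p_0$, the two branches are globally well-defined continuous functions on $\omega$, so that the notion of "swapping" is meaningful.

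First I would record the key algebraic input fixed at the beginning of Section 2: after the normalizing shift, the radicand $p_1^2(z) - p_0(z)$ has a zero of odd multiplicity at $0$. I write $p_1^2(z) - p_0(z) = z^{k} h(z)$ with $k$ odd and $h(0) \neq 0$. Because the zeros of a nonzero polynomial are isolated, I may shrink the neighborhood of $0$ in which $\omega$ lives so that $0$ is the only zero of $p_1^2 - p_0$ enclosed by $\omega$; in particular $h(z) \neq 0$ throughout the region bounded by $\omega$, and $T(\omega)$ avoids the diagonal.

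Next I would compute the total change of $\arg\left(p_1^2(z) - p_0(z)\right)$ as $z$ runs once along $\omega$. The factor $h(z)$ is nonvanishing in the region and so contributes winding number $0$, while the factor $z^{k}$ contributes $k$ times the winding number of $\omega$ about $0$, which is $d$. Hence the argument of the radicand increases by $2\pi k d$, so the tracked branch of $\sqrt{p_1^2(z) - p_0(z)}$ is multiplied by $e^{i\pi k d} = (-1)^{k d}$. Since both $k$ and $d$ are odd, the product $kd$ is odd and this factor equals $-1$: the square root flips sign, and therefore $g_{11}$ and $g_{12}$ are swapped, which is exactly the claim.

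The only delicate point is ensuring that the monodromy is governed solely by the zero at $0$. This requires that $\omega$ avoid the remaining branch points (the other zeros of $p_1^2 - p_0$) and enclose none of them, which is guaranteed by taking the neighborhood small enough, together with the fact that the single-valued contribution $-p_1(z)$ returns to its initial value around any closed loop. With these points in place, the winding-number computation is routine.
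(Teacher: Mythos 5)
Your proof is correct and takes essentially the same route as the paper's: the single-valued part $-p_1(z)$ returns to its value after the loop, while the square root picks up the factor $e^{i\pi k d}=(-1)^{kd}=-1$ because the radicand has a zero of odd multiplicity $k$ at $0$ and $\omega$ winds an odd number $d$ of times around it. Your winding-number bookkeeping (the nonvanishing factor $h$ contributes zero change of argument on a small disc) is a slightly cleaner justification than the paper's appeal to ``negligible higher-order terms,'' but the underlying monodromy argument is identical.
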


\begin{proof}
Let us examine how the image of the point $g_1$ changes when traversing $\omega$. Consider one of the branches. There, the point $g_1$ maps to the point $g_{11}$. When traversing $\omega$, the image of $g_1$ under the action $z \mapsto -p_1(z)$ returns to itself. Recall that $0$ is a root of $p_1^2(z) - p_0(z)$ of odd multiplicity. Therefore, this expression takes the form $z^{2k+1} \cdot  q(z)$, where $q(z)$ is a polynomial with a nonzero constant term $q_0$. In a sufficiently small neighborhood of zero, the higher-order terms are negligible, and $\sqrt{z^{2k+1} \cdot  q_0}$ changes sign when the point $z$ traverses the curve $\omega$, since the increment in the argument of the complex number is $\pi d (2k + 1)$, which corresponds to a half-integer number of turns. This implies that the images of $g_1$ are swapped when traversing $\omega$.
\end{proof}

\subsection{Defining a 2-valued dynamics by the action of a 2-valued group}\

Let a 2-valued dynamics $T$ be defined by the action of a 2-valued group $A$ with an element $a$: $\nu(a, c) = T(c)$ for all $c \in \C$. 

Denote $a*a$ by $[a_1, a_2]$.
Denote the 2-valued dynamics $\nu(a_1, c)$ by $T_1(c)$, the 2-valued dynamics $\nu(a_1, c)$ by $T_2(c)$. Consider the 4-valued dynamics $T \circ T$. For all $c \in C$ we have
$$T(T(c)) = \nu(a, \nu(a, c)) = \nu(a*a, c) = [T_1(c), T_2(c)],$$
and therefore
\begin{prop} \label{proposition:4div2}
The dynamics $T$ can be defined by the action of a 2-valued group only if 4-valued dynamics $T \circ T$ splits into two continuous 2-valued dynamics.
\end{prop}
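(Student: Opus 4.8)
The plan is to read the desired splitting directly off the associativity axiom and then to promote the two resulting branches to genuine continuous $2$-valued dynamics, so that the only real content of the proposition is the continuity of the pieces.

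First I would record the algebraic identity. Writing $a*a = [a_1, a_2]$ and applying the associativity axiom of Definition \ref{definition:action} with both group arguments equal to $a$, one gets, for every $c \in \C$,
$$T(T(c)) = \nu\big(a, \nu(a,c)\big) = \nu(a*a, c) = [\nu(a_1, c), \nu(a_2, c)] = [T_1(c), T_2(c)],$$
where $T_1 := \nu(a_1, \cdot)$ and $T_2 := \nu(a_2, \cdot)$. Here $\nu(a*a, c)$, with a two-element multiset as first argument, is interpreted as the multiset-union $[\nu(a_1,c), \nu(a_2,c)]$ of the two $2$-valued images; this is exactly the convention under which the two associativity multisets each have $m^2 = 4$ elements. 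This already exhibits $T\circ T$ as a union of two $2$-valued maps.

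The second step is continuity. Since we are in the continuous category, the action $\nu \colon A \times \C \to \sym^2(\C)$ is continuous and $a_1, a_2$ are \emph{fixed} elements of $A$; fixing the first argument therefore makes $T_1 = \nu(a_1, \cdot)$ and $T_2 = \nu(a_2, \cdot)$ continuous maps $\C \to \sym^2(\C)$, i.e. elements of $\mathcal{T}_2(\C)$. Because the union map $\sym^2(\C) \times \sym^2(\C) \to \sym^4(\C)$ is itself continuous, $[T_1, T_2]$ is a continuous $4$-valued dynamics, and by the displayed identity it coincides with $T \circ T$. Hence $T \circ T$ splits into the two continuous $2$-valued dynamics $T_1, T_2$, as claimed.

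The main obstacle is precisely this continuity claim, and it is here that the hypothesis must be read correctly. The purely set-theoretic action of Definition \ref{definition:action} only yields, pointwise, a pairing of the four points of $T(T(z))$ into $T_1(z) \sqcup T_2(z)$; by Proposition \ref{lemma:4seasons} these four points are distinct off a finite set, so a discontinuous pairing is a genuine danger — an arbitrary set-theoretic splitting of a continuous $4$-valued map need not be continuous, the obstruction being the monodromy of the branched covering $\{(z,w) : w \in T(T(z))\}$ around the finitely many exceptional points. What rescues the argument is that $[T_1, T_2]$ is not an arbitrary pairing but is produced by the fixed elements $a_1, a_2$ through the continuous action, which forces the pairing to be monodromy-invariant and hence globally continuous. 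I would therefore take care to state explicitly that the action is continuous (consistent with the rest of the paper and with the fact that $T_1, T_2$ are already called $2$-valued dynamics), since this is the single input that turns the trivial algebraic identity into the nontrivial topological conclusion; this is also exactly why the later sections can use the \emph{failure} of such a continuous splitting as an obstruction to the existence of a group action.
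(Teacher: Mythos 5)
Your argument is essentially the paper's own proof: the paper derives exactly the identity $T(T(c)) = \nu(a, \nu(a, c)) = \nu(a*a, c) = [T_1(c), T_2(c)]$ with $a*a = [a_1, a_2]$ and $T_i = \nu(a_i, \cdot)$, and concludes the proposition directly from it. Your additional paragraph on the continuity of $T_1, T_2$ addresses a point the paper leaves implicit (Definition~\ref{definition:action} does not formally require the action $\nu$ to be continuous, yet the proposition's conclusion and all of its later uses need continuous branches), so stating that hypothesis explicitly makes your write-up, if anything, more careful than the original.
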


Denote the images of $c$ under the action of $T$ by $c_1, c_2$, the images of $c_1$ by $c_{11}, c_{12}$, and the images of $c_2$ by $c_{21}, c_{22}$. Suppose that all four points $c_{11}, c_{12}, c_{21}, c_{22}$ are distinct.

\[
\begin{tikzcd}
    & & c_{11} \\
    & c_{1} \ar[ru] \ar[r] & c_{12} \\
    c \ar[ru] \ar[rd] \\
    & c_2 \ar[r] \ar[rd] & c_{21} \\
    & & c_{22}
\end{tikzcd}
\]

Any two arrows originating from the same vertex are equivalent. Accordingly, there are two fundamentally distinct ways to split the quadruple of images $c_{11}, c_{12}, c_{21}, c_{22}$ into two pairs:

\begin{itemize}
    \item pairs $[c_{11}, c_{12}], [c_{21}, c_{22}]$. In this case, one pair consists of the ``descendants'' of $c_1$, and the other pair consists of the ``descendants'' of $c_2$. We call this a type 1 splitting.
    \item pairs $[c_{11}, c_{21}], [c_{12}, c_{22}]$. In this case both pairs consist of one ``descendant'' of $c_1$ and one ``descendant'' of $c_2$. We call this a type 2 splitting.
\end{itemize}

\begin{lemma} \label{lemma:collision}
    Suppose that a 4-valued dynamics $D$ splits into two 2-valued dynamics $T_1, T_2$. Suppose also that there exists a sequence $(z_n)$ of points in $\C$ converging to $c$ such that splitting of $D$ into $T_1, T_2$ has one type at the points of the sequence but another type at $c$. Then not all images of the point $c$ under the dynamics $D$ are distinct.
\end{lemma}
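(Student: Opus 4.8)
The plan is to argue by contradiction: I assume that all four images of $c$ under $D = T \circ T$ are distinct and show that then the type of the splitting must be \emph{constant} in a whole neighbourhood of $c$, which is incompatible with a sequence $z_n \to c$ realizing the other type. The first observation is that if the four images $c_{11}, c_{12}, c_{21}, c_{22}$ are pairwise distinct, then the two intermediate points $c_1, c_2$ (the images of $c$ under $T$) must also be distinct: if $c_1 = c_2$, then $D(c) = [T(c_1), T(c_2)]$ would contain every element of $T(c_1)$ at least twice, contradicting distinctness.

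Next I would pass to a small simply connected disk $U$ around $c$ on which no collisions occur. Since $D$ is continuous, the multiset $D(c)$ has four distinct elements, and the set of such multisets is open in $\sym^4(\C)$, one can shrink $U$ so that $D(z)$ consists of four distinct points for every $z \in U$. On such a $U$, in the spirit of Proposition \ref{statement:branches} (the covering $\mathrm{Conf}_4(\C) \to \sym^4(\C) \setminus \Delta$ lifts over the simply connected $U$), the $4$-valued dynamics $D$ has four continuous single-valued branches $f_1, \dots, f_4 : U \to \C$, pairwise distinct at every point, with $D(z) = [f_1(z), \dots, f_4(z)]$. Likewise, since $c_1 \neq c_2$, after shrinking $U$ the $2$-valued dynamics $T$ has two continuous branches $t_1, t_2 : U \to \C$ with $t_1(c) = c_1$, $t_2(c) = c_2$, and $t_1(z) \neq t_2(z)$ on $U$.

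The heart of the argument is that two partitions of $\{f_1, \dots, f_4\}$ into pairs are each \emph{locally constant} on the connected set $U$. The first is the \emph{parent partition}: since $D(z) = [T(t_1(z)), T(t_2(z))]$ and the four values are distinct, the pairs $T(t_1(z))$ and $T(t_2(z))$ are disjoint, so each $f_i(z)$ lies in exactly one of them; the condition ``$f_i(z) \in T(t_1(z))$'' is both open and closed, because $z \mapsto \mathrm{dist}\bigl(f_i(z), T(t_1(z))\bigr)$ is continuous and vanishes precisely on this set, hence it is constant on $U$. This recovers at $c$ the grouping into descendants of $c_1$ and descendants of $c_2$. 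The second is the \emph{$T_1/T_2$ partition}: $T_1(z)$ is a two-element sub-multiset of the distinct multiset $D(z)$, so by continuity and distinctness the indices $\{i,j\}$ with $T_1(z) = [f_i(z), f_j(z)]$ do not change as $z$ varies over $U$. By definition the type of the splitting at $z$ is whether these two partitions agree (type 1) or are mixed (type 2); being the comparison of two partitions that are both constant on $U$, the type is constant on $U$. But $z_n \in U$ for all large $n$, so the type at $z_n$ equals the type at $c$, contradicting the hypothesis. Therefore the four images of $c$ cannot all be distinct.

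The main obstacle I anticipate is making the \emph{parent structure} rigorous: the very notion of ``type'' presupposes that one can say which intermediate point $t_i(z)$ each final image descends from, and this is only meaningful once $c_1 \neq c_2$ and the pairs $T(t_1(z)), T(t_2(z))$ stay disjoint. Constructing the continuous branches $t_1, t_2$ and $f_1, \dots, f_4$ simultaneously, and verifying that both partitions are locally constant, is exactly where the hypothesis ``all four images distinct'' is used; once that is in place, the locally-constant-on-a-connected-set conclusion is routine.
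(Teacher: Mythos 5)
Your proof is correct and takes essentially the same approach as the paper's: argue by contradiction, use continuity of $D$ to fix disjoint neighborhoods of the four distinct images over a small neighborhood of $c$, conclude that the splitting type is locally constant there, and contradict the existence of the sequence $(z_n)$. The paper implements this with paths inside a $\delta$-neighborhood and disjoint $\varepsilon$-balls rather than lifted single-valued branches; your clopen argument for the parent partition merely makes explicit a point the paper treats implicitly.
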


\begin{proof}
    Assume the converse: suppose that the point $c$ maps to a quadruple of distinct points $[c_{11}, c_{12}, c_{21}, c_{22}]$ under the action of $D$. Choose $\varepsilon > 0$ such that the $\varepsilon$-neighborhoods of points $c_{ij}$ do not intersect, and denote the corresponding neighborhood of the point $[c_{11}, c_{12}, c_{21}, c_{22}] \in \sym^4(\C)$ by $U$. The dynamics $D$ is continuous, therefore the preimage of $U$ under the dynamics $D$ is open. Since it contains $c$, it also contains a $\delta$-neighborhood of $c$. There exists a path lying entirely within $U_\delta(c)$ between any two points within this neighborhood. By construction, the images of all points along this path under the mapping $D$ and hence under the mappings $T_1, T_2$ lie within $U$.
    
    We have chosen the neighborhoods of $c_{11}, c_{12}, c_{21}, c_{22}$ to be non-intersecting, therefore the image of a point under the mapping $T_1$ remains within the same neighborhoods of the points $c_{11}, c_{12}, c_{21}, c_{22}$ as we traverse a path within $U_\delta(c)$. Therefore, the splitting of $D$ into two 2-valued dynamics has the same type throughout the $\delta$-neighborhood of $c$ as at the point $c$. At the same time, almost all points of the sequence $(z_n)$ lie within the neighborhood of $c$. This contradiction proves the lemma.
\end{proof}

\subsection{Main result for dynamics from $\mathcal{P}_2(\C)$}\

Now we prove Theorem \ref{theorem:criteria} in a more explicit form.

\begin{theorem} \label{theorem:exact_criteria}
    Let $T$ be a non-degenerate dynamics in $\mathcal{P}_2(\C)$:
    $$T(z) = -p_1(z) \pm \sqrt{p_1^2(z) - p_0(z)}$$
    Suppose that $p_0$ has a root $z_1$ of an odd multiplicity $d$. Then $T$ cannot be defined by the action of a 2-valued group.
\end{theorem}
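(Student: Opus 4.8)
The plan is to argue by contradiction through the monodromy of the four-valued dynamics $T\circ T$. Assume $T$ is defined by the action of a 2-valued group. By Proposition \ref{proposition:4div2} the four-valued dynamics $T\circ T$ splits into two continuous 2-valued dynamics $T_1,T_2\colon\C\to\sym^2(\C)$. I would first recast this in the language of coverings. Let $F\subset\C$ be the finite set of points where $T\circ T$ fails to have four distinct values (finite by Proposition \ref{lemma:4seasons}); over $\C\setminus F$ the dynamics $T\circ T$ is a genuine $4$-sheeted cover. Each $T_i$, being a continuous map into $\sym^2(\C)$, selects a $2$-element subset of the four sheets that returns to itself along every loop. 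Hence a splitting can exist only if the four sheets admit a partition into two pairs each of which is fixed \emph{setwise} by the monodromy around every loop: a loop is not allowed merely to interchange the two blocks, since that would interchange the globally defined functions $T_1$ and $T_2$.

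The heart of the argument is to compute two explicit monodromy permutations, based at a common point $b_0$ chosen near $0$, and to check that together they leave no such partition invariant. Following Notation \ref{notation}, over $b_0$ the two children $\xi_1,\xi_2$ of $T(b_0)$ both lie near $z_0$ and $T(z_0)=[z_3,z_3']$, so label the four sheets $\eta_{ij}$, where $\eta_{ij}$ is the $j$-th grandchild obtained through $\xi_i$, with $\eta_{11},\eta_{21}$ near $z_3$ and $\eta_{12},\eta_{22}$ near $z_3'$. For the loop $\sigma_0$ around $0$: since $0$ is a root of odd multiplicity of $p_1^2-p_0$, Proposition \ref{lemma:omega} interchanges the inner children $\xi_1,\xi_2$, and (using $z_3\neq z_3'$) continuity matches the grandchildren by their cluster, giving $\sigma_0=(\eta_{11}\,\eta_{21})(\eta_{12}\,\eta_{22})$. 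For the loop $\sigma_1$ around $z_1$, in the generic case that $z_1$ is not a root of $p_1$: by Proposition \ref{lemma:gamma1} the child lying near $0$ travels along a curve $\omega$ winding an odd number of times around $0$ while the two inner children are not interchanged, so by Proposition \ref{lemma:omega} its two grandchildren are swapped. Conjugating the $z_1$-loop to $b_0$ along a path in $\C\setminus F$ carries this near-$0$ child to one of $\xi_1,\xi_2$, so $\sigma_1=(\eta_{11}\,\eta_{12})$ or $(\eta_{11}\,\eta_{12})(\eta_{21}\,\eta_{22})$.

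I would then simply inspect the three partitions of $\{\eta_{11},\eta_{12},\eta_{21},\eta_{22}\}$ into two pairs. The descendant partition $\{\eta_{11}\eta_{12}\mid\eta_{21}\eta_{22}\}$ is fixed block-wise by $\sigma_1$ but has its blocks interchanged by $\sigma_0$; the cluster partition $\{\eta_{11}\eta_{21}\mid\eta_{12}\eta_{22}\}$ is fixed block-wise by $\sigma_0$ but not by $\sigma_1$; and the remaining partition is destroyed by $\sigma_0$. Thus no partition is invariant under both monodromies, contradicting the existence of $T_1,T_2$ and proving the theorem in the generic case. (Equivalently, by Lemma \ref{lemma:collision} the chosen partition of sheets is constant along any path in $\C\setminus F$; carried from a neighbourhood of $z_1$, where it must be the descendant partition, to a neighbourhood of $0$ it remains the descendant partition, which is not $\sigma_0$-invariant — again a contradiction.)

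The main obstacle, where most of the work lies, is the degenerate configurations in which various of the points $0,z_0,z_2,z_3,z_3',z_4,z_4'$ coincide. The most delicate is when $z_1$ is itself a root of $p_1$, so that $z_2=0$ and both children of $z_1$ collapse to $0$; this is precisely case (2) of Proposition \ref{lemma:gamma1}, where the image of $\gamma_1$ is a single curve winding an odd number of times around $0$ and the two inner children are interchanged. Here I expect the monodromy $\sigma_1$ around $z_1$ to be a $4$-cycle on the grandchildren, and a $4$-cycle preserves no partition into two pairs with fixed blocks, so it rules out any splitting on its own. The coincidence $z_3=z_3'$ (where $z_0$ too becomes a branch point of $T$ and all four grandchildren cluster near one point) and the other collisions must be handled by the same circle of ideas — recomputing $\sigma_0$ and $\sigma_1$ in each configuration and verifying the absence of a common invariant partition; carrying this out uniformly is the principal technical difficulty.
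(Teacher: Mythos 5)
Your generic-case argument is essentially the paper's own proof translated into monodromy language: your computation of $\sigma_1$ is Lemma \ref{lemma:z1_area} (only the descendant/type 1 partition can survive a loop around $z_1$), your computation of $\sigma_0$ is Lemma \ref{lemma:0_area} (only a type 2 partition can survive a loop around $0$), and your base-point conjugation plays the role of the paper's path-plus-infimum argument via Lemma \ref{lemma:collision} — indeed you state that variant yourself as the ``equivalent'' formulation. So in the case $z_1$ not a root of $p_1$ and $z_3\neq z_3'$, your proof is correct and matches the paper.

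The gap is in the cases you defer, and in where you locate the difficulty. First, a misidentification: $z_1$ being a root of $p_1$ does \emph{not} put you in case (2) of Proposition \ref{lemma:gamma1}. Writing $p_1=(z-z_1)^a\tilde p_1$, $p_0=(z-z_1)^b\tilde p_0$, the proof of that proposition shows case (2) occurs when $2a>b$, while $2a<b$ gives case (1) with \emph{both} children near $0$ — there your generic computation of $\sigma_1$ still applies verbatim, so this subcase is not actually delicate (and your conjecture that case (2) yields a $4$-cycle, killing any splitting on its own, is correct and is exactly what the paper proves). Second, and more importantly, the coincidence $z_3=z_3'$, which you call the principal technical difficulty, dissolves once you stop computing $\sigma_0$ by matching grandchildren to the clusters around $z_3$ and $z_3'$. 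That cluster-matching step genuinely fails when $z_3=z_3'$, but it is also unnecessary: all you need from $\sigma_0$ is that it interchanges the two blocks of the descendant partition, and this follows from Proposition \ref{lemma:omega} alone. Along the loop around $0$ the two children $\xi_1,\xi_2$ swap; over $\C\setminus F$ the children can never collide (if $\xi_1=\xi_2$ then $T(\xi_1)=T(\xi_2)$, so the four grandchildren could not be distinct), hence sheet transport preserves the parent--child relation and carries the grandchild pair of $\xi_1$ as a set onto the grandchild pair of $\xi_2$. So the descendant partition is never block-fixed by $\sigma_0$, whatever the limit positions $z_3,z_3'$ may be — contradicting the fact that $\sigma_1$ forces precisely that partition. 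This is how the paper's Lemma \ref{lemma:0_area} argues (it pairs grandchildren via the image paths $\gamma_1,\gamma_2$, never via proximity to $z_3,z_3'$), and it handles all the remaining coincidences among $z_2,z_4,z_4'$ uniformly, since the curves are chosen using Proposition \ref{lemma:4seasons} so that all four images of points on them stay distinct.
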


We use notation from Subsection 2.2.

\[
\begin{tikzcd}
    & & z_4 \\
    & z_2 \ar[ru] \ar[r] & z'_4 \\
    z_1 \ar[ru] \ar[rd] \\
    & 0 \ar[r, shift left=1] \ar[r, shift right=1] & z_0 \ar[r] \ar[rd] & z_3 \\
    & & & z'_3
\end{tikzcd}
\]

To prove Theorem \ref{theorem:exact_criteria} we need the following two lemmas.

From Proposition \ref{lemma:4seasons} it follows that in a neighborhood of any $z \in \C$, one can choose a closed curve looping around this point such that the four images of any point on the curve under $T \circ T$ are pairwise distinct. In the proofs of the following two lemmas, we will specifically choose such curves.

\begin{lemma} \label{lemma:z1_area}
    In a neighborhood of the point $z_1$, the 4-valued dynamics $T \circ T$ either does not have a valid splitting into two 2-valued dynamics or has a splitting of the first type.
\end{lemma}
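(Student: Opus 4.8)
My plan is to read off the local behaviour of $T\circ T$ near $z_1$ from the monodromy of its four branches along a small loop around $z_1$. By Proposition \ref{lemma:4seasons} I may pick a simple closed curve $\gamma_1$ around $z_1$, in a neighbourhood so small that the four images $c_{11},c_{12},c_{21},c_{22}$ of every point $g\in\gamma_1$ under $T\circ T$ are pairwise distinct (the finitely many exceptional points, and $z_1$ itself where the images degenerate to $[z_0,z_0,z_4,z'_4]$, are avoided). Over $\gamma_1$ the four images then form an honest four-sheeted cover, and one traversal of $\gamma_1$ induces a monodromy permutation $\sigma$ of the four points $c_{11},c_{12},c_{21},c_{22}$, where, as in the notation before Lemma \ref{lemma:collision}, the pair $c_{11},c_{12}$ consists of the two $T$-images of the first child $c_1$ of $g$ and $c_{21},c_{22}$ of the two $T$-images of the second child $c_2$. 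The whole lemma reduces to computing $\sigma$ and listing the $2+2$ partitions it preserves.

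First suppose we are in case $1)$ of Proposition \ref{lemma:gamma1}, so that $T(\gamma_1)$ is a pair of closed paths and the two children $c_1,c_2$ do not swap along $\gamma_1$. Since $T(z_1)=[0,z_2]$ and $0$ is a root of odd multiplicity of $p_1^2-p_0$, exactly one child travels along a loop near $0$ that winds an odd number of times around $0$ (when $z_2\neq0$ this is the loop $\omega$ near $0$; when $z_1$ is a root of $p_1$ both children sit near $0$ and their windings sum to the odd number $b$, so precisely one is odd). By Proposition \ref{lemma:omega} the two descendants of that child are interchanged by $\sigma$, while the descendants of the other child are permuted among themselves. Hence $\sigma$ transposes internally at least one of the pairs $\{c_{11},c_{12}\}$, $\{c_{21},c_{22}\}$ and carries each of these two pairs to itself as a set.

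It remains to check which splittings such a $\sigma$ allows. A splitting of $T\circ T$ into two continuous $2$-valued dynamics $T_1,T_2$ on a neighbourhood $U\supset\gamma_1$ restricts to two single-valued continuous maps $\gamma_1\to\sym^2(\C)$, so each of the pairs $T_1(g),T_2(g)$ must be carried to \emph{itself} by $\sigma$. For the $\sigma$ just found, the type $1$ partition $\{c_{11},c_{12}\},\{c_{21},c_{22}\}$ has both halves fixed, whereas in either type $2$ partition the element $c_{11}$ moves to $c_{12}$ while $c_{21},c_{22}$ stay in their pair, so each type $2$ pair is sent to the other type $2$ pair or to no pair at all; thus no type $2$ splitting survives and any splitting is of type $1$. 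The main obstacle is the remaining case $2)$ of Proposition \ref{lemma:gamma1}, which occurs when $z_1$ is a root of $p_1$ and the two children swap along $\gamma_1$: then $\sigma$ interchanges the two descendant pairs, and the local behaviour $T(z)\sim\pm\sqrt{-\tilde p_0(z_1)}\,(z-z_1)^{b/2}$ shows that traversing $\gamma_1$ twice makes the relevant loop wind $b$ (odd) times around $0$, so by Proposition \ref{lemma:omega} one gets $\sigma^2=(c_{11}\,c_{12})(c_{21}\,c_{22})$ and hence $\sigma$ is a $4$-cycle. A $4$-cycle preserves no $2+2$ partition, so in this case $T\circ T$ admits no valid splitting at all, which is the other alternative permitted by the statement.
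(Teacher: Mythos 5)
Your proof is correct and follows essentially the same route as the paper's: the same case division coming from Proposition \ref{lemma:gamma1}, with Proposition \ref{lemma:omega} supplying the swap of the descendants of the odd-winding child, and monodromy-invariance of the two pairs along $\gamma_1$ forcing either a type~1 splitting or no splitting at all. Your explicit computation that the monodromy is a $4$-cycle in the swapping case (via $\sigma^2=(c_{11}\,c_{12})(c_{21}\,c_{22})\neq e$) is just a slightly more detailed justification of the paper's assertion that the four images are ``cyclically permuted'' there.
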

\begin{proof}
Denote  a closed curve looping around $z_1$ by $\gamma_1$. Denote by $g$ a point on this curve, by $g_1, g_2$ images of $g$ under the action of $T$. It follows from \ref{lemma:gamma1} that under a single application of the dynamics $T$ the curve $\gamma_1$ is mapped (1) either to a closed curve making an odd number of turns around $0$ with a pair of images of $g_1$ being swapped when traversing $\gamma_1$, (2) or to a pair of closed curves around $0$ and around $z_2$.

Let us first consider case (1).

\begin{figure}[h]
    \begin{minipage}[h]{0.49\linewidth}
          \begin{center}
              \scalebox{0.7}{
                \begin{tikzpicture}[scale=1] 
                \def \coef {1.42857142857}
                \filldraw [black] (0, 0) circle (2pt) node[anchor=south]{$z_1$};
                \filldraw [black] (\coef * 2, \coef * 0) circle (2pt) node[anchor=south]{$0$};
                \filldraw [black] (\coef * 4, -\coef * 0) circle (2pt) node[anchor=south]{$z_0$};

                \draw[thick, ->] (\coef * 0, \coef * 0) -- (\coef * 2, \coef * 0);
                \draw[thick, ->] (\coef * 2, \coef * 0) -- (\coef * 4, -\coef * 0);

                \filldraw [black] (0, 0.7) circle (1pt) node[anchor=south]{$g$};
                \draw[thick, ->] (0, 0.7) arc (90:450:0.7);
                
                \filldraw [black] (\coef * 2, \coef * 0 + 0.7) circle (1pt) node[anchor=south]{$g_2$};
                \draw[red, thick, ->] (\coef * 2, \coef * 0 + 0.7) arc (90:270:0.7);
                \filldraw [black] (\coef * 2, \coef * 0 - 0.7) circle (1pt) node[anchor=south]{$g_1$};
                \draw[blue, thick, ->] (\coef * 2, \coef * 0 - 0.7) arc (-90:90:0.7);
                
                \filldraw [black] (\coef * 4 + 0.5, \coef * 0 + 0.5) circle (1pt) node[anchor=south]{$g_{21}$};
                \filldraw [black] (\coef * 4 - 0.5, \coef * 0 + 0.5) circle (1pt) node[anchor=south]{$g_{12}$};
                \filldraw [black] (\coef * 4 + 0.5, -\coef * 0 - 0.5) circle (1pt) node[anchor=north]{$g_{11}$};
                \filldraw [black] (\coef * 4 - 0.5, -\coef * 0 - 0.5) circle (1pt) node[anchor=north]{$g_{22}$};
                \draw[red, thick, ->] (\coef * 4 + 0.5, \coef * 0 + 0.5) to[out = 300, in = 60] (\coef * 4 + 0.5, -\coef * 0 - 0.5);
                \draw[red, thick, ->] (\coef * 4 - 0.5, -\coef * 0 - 0.5) to[out = 120, in = 240] (\coef * 4 - 0.5, \coef * 0 + 0.5);
                \draw[blue, thick, ->] (\coef * 4 - 0.5, \coef * 0 + 0.5) to[out = 50, in = 130] (\coef * 4 + 0.5, \coef * 0 + 0.5);
                \draw[blue, thick, ->] (\coef * 4 + 0.5, -\coef * 0 - 0.5) to[out = -130, in = -50] (\coef * 4 - 0.5, -\coef * 0 - 0.5);
                \end{tikzpicture}}
                \end{center}
\end{minipage}
\hfill
\begin{minipage}[h]{0.49\linewidth}
          \begin{center}
              \scalebox{0.7}{
                \begin{tikzpicture}
                \def \coef {1.42857142857}
                \filldraw [black] (0, 0) circle (2pt) node[anchor=south]{$z_1$};
                \filldraw [black] (\coef * 2, \coef * 0) circle (2pt) node[anchor=south]{$0$};
                \filldraw [black] (\coef * 4, -\coef * 0) circle (2pt) node[anchor=south]{$z_3$};

                \draw[thick, ->] (\coef * 0, \coef * 0) -- (\coef * 2, \coef * 0);
                \draw[thick, ->] (\coef * 2, \coef * 0) -- (\coef * 4, -\coef * 0);

                \filldraw [black] (0, 0.7) circle (1pt) node[anchor=south]{$g$};
                \draw[thick, ->] (0, 0.7) arc (90:450:0.7);
                \filldraw [black] (\coef * 2, \coef * 0 + 0.7) circle (1pt) node[anchor=south]{$g_2$};
                \draw[red, thick, ->] (\coef * 2, \coef * 0 + 0.7) arc (90:270:0.7);
                \filldraw [black] (\coef * 2, \coef * 0 - 0.7) circle (1pt) node[anchor=south]{$g_1$};
                \draw[blue, thick, ->] (\coef * 2, \coef * 0 - 0.7) arc (-90:90:0.7);
                
                \filldraw [black] (\coef * 4 + 0.5, \coef * 0 + 0.5) circle (1pt) node[anchor=south]{$g_{21}$};
                \filldraw [black] (\coef * 4 - 0.5, \coef * 0 + 0.5) circle (1pt) node[anchor=south]{$g_{12}$};
                \filldraw [black] (\coef * 4 + 0.5, -\coef * 0 - 0.5) circle (1pt) node[anchor=north]{$g_{11}$};
                \filldraw [black] (\coef * 4 - 0.5, -\coef * 0 - 0.5) circle (1pt) node[anchor=north]{$g_{22}$};
                \draw[red, thick, ->] (\coef * 4 - 0.5, -\coef * 0 - 0.5) to[out = -50, in = -130] (\coef * 4 + 0.5, -\coef * 0 - 0.5);
                \draw[red, ->] (\coef * 4 + 0.5, \coef * 0 + 0.5) to[out = 130, in = 50] (\coef * 4 - 0.5, \coef * 0 + 0.5);
                \draw[blue, thick, ->] (\coef * 4 - 0.5, \coef * 0 + 0.5) to[out = -120, in = 120] (\coef * 4 - 0.5, -\coef * 0 - 0.5);
                \draw[blue, thick, ->] (\coef * 4 + 0.5, -\coef * 0 - 0.5) to[out = 60, in = -60] (\coef * 4 + 0.5, \coef * 0 + 0.5);
                \end{tikzpicture}}
                \end{center}
\end{minipage}
\caption{Case 1: there is no valid splitting in a neighborhood of $z_1$}
\label{paint:2-fail}
\end{figure}
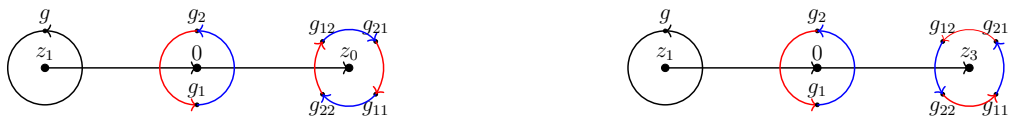

    Denote by $\omega$ the image of the curve $\gamma_1$ under the action of $T$.
    The curve $\omega$ starting at the point $g_1$ makes an odd number of turns around $0$. It follows from Proposition \ref{lemma:omega}, that when traversing $\omega$, the images of any point on this curve are swapped. Therefore, since the images of the point $g$ under $T$ are also swapped when traversing $\gamma_1$, it follows that the four images of $g$ under the double application of $T$ are cyclically permuted when traversing $\gamma_1$. This implies that the 4-valued dynamics $T \circ T$ cannot be split into two continuous 2-valued dynamics, and hence the dynamics $T$ is not defined by the action of a $2$-valued group.

    Now consider case (2). Denote by $\omega$ the branch of $T(\gamma_1)$ in a neighborhood of $0$ making an odd number of turns around this point. It follows from \ref{lemma:gamma1} that such branch exists. Fix a point $g$ on the curve $\gamma_1$. Denote the images of this point under the action of $T$ by $g_1, g_2$, let $g_1$ be on $\gamma_1$. Denote the images of $g_1$ under the action of $T$ by $g_{11}, g_{12}$, the images of $g_2$ under the action of $T$ by $g_{21}, g_{22}$.

    \begin{figure}[h]
        \begin{center}
		          \scalebox{0.7}{
                    \begin{tikzpicture}
                    \def \coef {1.42857142857}
                    \filldraw [black] (0, 0) circle (2pt) node[anchor=south]{$z_1$};
                    \filldraw [black] (\coef * 2, \coef * 1) circle (2pt) node[anchor=south]{$z_2$};
                    \filldraw [black] (\coef * 2, -\coef) circle (2pt) node[anchor=south]{$0$};
                    \filldraw [black] (\coef * 4, \coef * 2.3) circle (2pt) node[anchor=south]{$z_4$};
                    \filldraw [black] (\coef * 4, \coef * 1) circle (2pt) node[anchor=south]{$z'_4$};
                    \filldraw [black] (\coef * 4, -\coef) circle (2pt) node[anchor=south]{$z_0$};

                    \draw[thick, ->] (\coef * 0, \coef * 0) -- (\coef * 2, \coef * 1);
                    \draw[thick, ->] (\coef * 0, \coef * 0) -- (\coef * 2, -\coef * 1);
                    \draw[thick, ->] (\coef * 2, -\coef) -- (\coef * 4, -\coef);

                    \filldraw [black] (0, 0.7) circle (1pt) node[anchor=south]{$g$};
                    \draw[thick, ->] (0, 0.7) arc (90:450:0.7);
                    \filldraw [black] (\coef * 2, \coef * 1 + 0.7) circle (1pt) node[anchor=south]{$g_2$};
                    \draw[red, thick, ->] (\coef * 2, \coef * 1 + 0.7) arc (90:450:0.7);
                    \filldraw [black] (\coef * 2, -\coef + 0.7) circle (1pt) node[anchor=south]{$g_1$};
                    \draw[blue, thick, ->] (\coef * 2, -\coef + 0.7) arc (90:450:0.7);
                    \filldraw [black] (\coef * 4, \coef * 2.3 + 0.7) circle (1pt) node[anchor=south]{$g_{21}$};
                    \filldraw [black] (\coef * 4, \coef * 1 + 0.7) circle (1pt) node[anchor=south]{$g_{22}$};
                    \filldraw [black] (\coef * 4, -\coef + 0.7) circle (1pt) node[anchor=south]{$g_{11}$};
                    \filldraw [black] (\coef * 4, -\coef - 0.7) circle (1pt) node[anchor=south]{$g_{12}$};
                    \draw[blue, thick, ->] (\coef * 4, -\coef + 0.7) arc (90:270:0.7);
                    \draw[blue, thick, ->] (\coef * 4, -\coef - 0.7) arc (-90:90:0.7);
                    \end{tikzpicture}}
                    \end{center}
        \caption{Case 2: splitting of type 1 in a neighborhood of $z_1$}
    \end{figure}
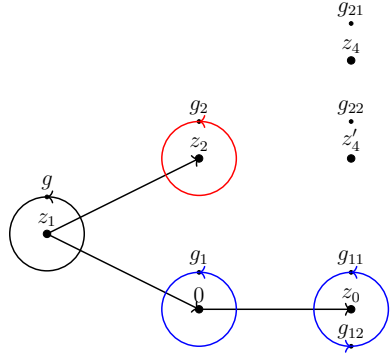

    The curve $\omega$ starting at the point $g_1$ makes an odd number of turns around $0$. Thus, according to Proposition \ref{lemma:omega}, the two images of $g_1$ are swapped when traversing $\omega$.

    All four images of every point on $\gamma_1$ under the action of $T \circ T = [T_1, T_2]$ are distinct, the images of $g_{11}$ and $g_{12}$ are swapped when traversing $\gamma_1$. Therefore since $T_1$ and $T_2$ should be continuous, it follows that $[g_{11}, g_{12}] = T_i(g)$ for some $i$. So the four images of $g$ under the action of $T \circ T$ must split into pairs as follows: $[g_{11}, g_{12}], [g_{21}, g_{22}]$. This is the type 1 splitting.
    \end{proof}

    \begin{lemma} \label{lemma:0_area}
        In a neighborhood of the point $0$ the 4-valued dynamics $T \circ T$ either does not have a valid splitting into two 2-valued dynamics or has a splitting of the second type.
    \end{lemma}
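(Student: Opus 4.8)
The plan is to run the same monodromy argument as in the proof of Lemma \ref{lemma:z1_area}, but now centered at the branch point $0$ instead of at $z_1$, and to show that the admissible splitting type gets reversed. First, using Proposition \ref{lemma:4seasons}, I would choose a simple closed curve $\gamma_0$ in a small neighborhood of $0$, looping once around $0$, such that the four images of every point of $\gamma_0$ under $T \circ T$ are pairwise distinct. Fix a base point $g$ on $\gamma_0$, write $T(g) = [g_1, g_2]$ and $T(g_1) = [g_{11}, g_{12}]$, $T(g_2) = [g_{21}, g_{22}]$, so that $[g_{11}, g_{12}, g_{21}, g_{22}]$ are the four distinct images of $g$ under $T \circ T$. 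As in Lemma \ref{lemma:z1_area}, the whole argument is about the monodromy permutation of this quadruple as $g$ runs once around $\gamma_0$.

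The key input is Proposition \ref{lemma:omega}. By the standing assumption (recorded at the start of Section 2) the point $0$ is a root of $p_1^2 - p_0$ of odd multiplicity, and $\gamma_0$ winds around $0$ an odd number of times; hence Proposition \ref{lemma:omega} applies and shows that the two images $g_1, g_2$ of $g$ under a single application of $T$ are interchanged when $g$ traverses $\gamma_0$ once. In other words, the monodromy of $T$ along $\gamma_0$ is the transposition $g_1 \leftrightarrow g_2$. I would then track the four points under a second application of $T$: as $g$ runs once around $\gamma_0$, the branch $g_1$ moves continuously into the position of $g_2$, so by continuity of $T$ the unordered pair $\{g_{11}, g_{12}\}$ of its images deforms continuously into $T(g_2) = \{g_{21}, g_{22}\}$, and symmetrically $\{g_{21}, g_{22}\}$ deforms into $\{g_{11}, g_{12}\}$. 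Since the four images are distinct, these two ``sibling'' pairs are genuinely different two-element sets, and the monodromy interchanges them.

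To finish, suppose for contradiction that $T \circ T$ admitted a splitting of the first type in a neighborhood of $0$, i.e. $T \circ T = [T_1, T_2]$ with continuous $2$-valued dynamics satisfying $T_1(g) = \{g_{11}, g_{12}\}$ and $T_2(g) = \{g_{21}, g_{22}\}$. Traversing $\gamma_0$ once and invoking continuity of $T_1$, the multiset $T_1(g)$ must return to itself; but the monodromy sends $\{g_{11}, g_{12}\}$ to $\{g_{21}, g_{22}\} \neq \{g_{11}, g_{12}\}$, a contradiction. Hence no splitting of the first type exists near $0$, so that if any valid splitting exists it must be of the second type, which is exactly the assertion of the lemma.

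The point I expect to require the most care is the contrast with Lemma \ref{lemma:z1_area}: there the extra odd winding of the intermediate curve $\omega$ around $0$ produced a \emph{further} swap of the two images of $g_1$, which combined with the swap of $g_1, g_2$ to leave type~1 as the only survivor; here, by contrast, it is the bare transposition $g_1 \leftrightarrow g_2$ that interchanges the sibling pairs and thereby excludes type~1, leaving only type~2. I would therefore take care that $\gamma_0$ is small enough for Proposition \ref{lemma:omega} to apply, and that the curve is chosen (as permitted by Proposition \ref{lemma:4seasons}) so that the four $T \circ T$-images stay distinct along it, which is what makes the two sibling pairs honestly distinct and the induced permutation of the quadruple well defined.
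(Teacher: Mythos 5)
Your proof is correct and takes essentially the same route as the paper's: both choose a small loop around $0$ along which the four $T \circ T$-images stay distinct (via Proposition \ref{lemma:4seasons}), invoke Proposition \ref{lemma:omega} to get the swap $g_1 \leftrightarrow g_2$, and deduce that the monodromy interchanges the sibling pairs $\{g_{11}, g_{12}\}$ and $\{g_{21}, g_{22}\}$, which is incompatible with a continuous type 1 splitting. The paper merely packages this slightly more explicitly, distinguishing the two possible monodromy permutations (a 4-cycle, yielding no valid splitting at all, versus a double transposition, yielding type 2), but this case analysis is logically equivalent to your direct exclusion of type 1.
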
 
    \begin{proof}
    Recall that
    $$T \circ T(0) = [z_3, z_3, z'_3, z'_3]$$

    Denote a closed simple curve looping around $0$ by $\omega$, let $g$ be a point on $\omega$.

    Denote the images of $g$ under the action of dynamics $T$ by $g_1, g_2$. It follows from Proposition \ref{lemma:omega}, that $\omega$ maps to a pair of paths from $g_1$ tp $g_2$ and from $g_2$ to $g_1$. Denote these paths by $\gamma_1$ and $\gamma_2$, respectively. Under the application of the dynamics $T$, the point $g_1$ maps to a pair of points $g_{11}, g_{12}$ in neighborhoods of $z_3, z'_3$, respectively, and the point $g_2$ maps to a pair of points $g_{21}, g_{22}$ in neighborhoods of $z_3, z'_3$, respectively.

    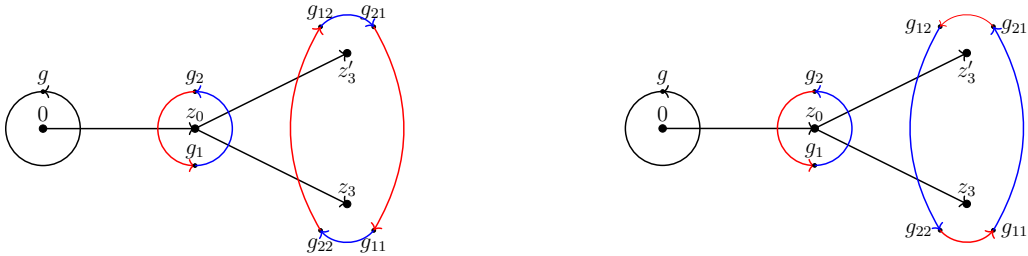
\begin{figure}[h]
        \begin{minipage}[h]{0.49\linewidth}
		      \begin{center}
		          \scalebox{0.7}{
                    \begin{tikzpicture}
                    \def \coef {1.42857142857}
                    \filldraw [black] (0, 0) circle (2pt) node[anchor=south]{$0$};
                    \filldraw [black] (\coef * 2, \coef * 0) circle (2pt) node[anchor=south]{$z_0$};
                    \filldraw [black] (\coef * 4, -\coef * 1) circle (2pt) node[anchor=south]{$z_3$};
                    \filldraw [black] (\coef * 4, \coef * 1) circle (2pt) node[anchor=north]{$z'_3$};

                    \draw[thick, ->] (\coef * 0, \coef * 0) -- (\coef * 2, \coef * 0);
                    \draw[thick, ->] (\coef * 2, \coef * 0) -- (\coef * 4, -\coef * 1);
                    \draw[thick, ->] (\coef * 2, \coef * 0) -- (\coef * 4, \coef * 1);

                    \filldraw [black] (0, 0.7) circle (1pt) node[anchor=south]{$g$};
                    \draw[thick, ->] (0, 0.7) arc (90:450:0.7);
                    
                    \filldraw [black] (\coef * 2, \coef * 0 + 0.7) circle (1pt) node[anchor=south]{$g_2$};
                    \draw[red, thick, ->] (\coef * 2, \coef * 0 + 0.7) arc (90:270:0.7);
                    \filldraw [black] (\coef * 2, \coef * 0 - 0.7) circle (1pt) node[anchor=south]{$g_1$};
                    \draw[blue, thick, ->] (\coef * 2, \coef * 0 - 0.7) arc (-90:90:0.7);
                    
                    \filldraw [black] (\coef * 4 + 0.5, \coef * 1 + 0.5) circle (1pt) node[anchor=south]{$g_{21}$};
                    \filldraw [black] (\coef * 4 - 0.5, \coef * 1 + 0.5) circle (1pt) node[anchor=south]{$g_{12}$};
                    \filldraw [black] (\coef * 4 + 0.5, -\coef - 0.5) circle (1pt) node[anchor=north]{$g_{11}$};
                    \filldraw [black] (\coef * 4 - 0.5, -\coef - 0.5) circle (1pt) node[anchor=north]{$g_{22}$};
                    \draw[red, thick, ->] (\coef * 4 + 0.5, \coef + 0.5) to[out = 300, in = 60] (\coef * 4 + 0.5, -\coef - 0.5);
                    \draw[red, thick, ->] (\coef * 4 - 0.5, -\coef - 0.5) to[out = 120, in = 240] (\coef * 4 - 0.5, \coef * 1 + 0.5);
                    \draw[blue, thick, ->] (\coef * 4 - 0.5, \coef + 0.5) to[out = 50, in = 130] (\coef * 4 + 0.5, \coef * 1 + 0.5);
                    \draw[blue, thick, ->] (\coef * 4 + 0.5, -\coef - 0.5) to[out = -130, in = -50] (\coef * 4 - 0.5, -\coef - 0.5);
                    \end{tikzpicture}}
                    \end{center}
	\end{minipage}
	\hfill
    \begin{minipage}[h]{0.49\linewidth}
		      \begin{center}
		          \scalebox{0.7}{
                    \begin{tikzpicture}
                    \def \coef {1.42857142857}
                    \filldraw [black] (0, 0) circle (2pt) node[anchor=south]{$0$};
                    \filldraw [black] (\coef * 2, \coef * 0) circle (2pt) node[anchor=south]{$z_0$};
                    \filldraw [black] (\coef * 4, -\coef * 1) circle (2pt) node[anchor=south]{$z_3$};
                    \filldraw [black] (\coef * 4, \coef * 1) circle (2pt) node[anchor=north]{$z'_3$};

                    \draw[thick, ->] (\coef * 0, \coef * 0) -- (\coef * 2, \coef * 0);
                    \draw[thick, ->] (\coef * 2, \coef * 0) -- (\coef * 4, -\coef * 1);
                    \draw[thick, ->] (\coef * 2, \coef * 0) -- (\coef * 4, \coef * 1);

                    \filldraw [black] (0, 0.7) circle (1pt) node[anchor=south]{$g$};
                    \draw[thick, ->] (0, 0.7) arc (90:450:0.7);
                    \filldraw [black] (\coef * 2, \coef * 0 + 0.7) circle (1pt) node[anchor=south]{$g_2$};
                    \draw[red, thick, ->] (\coef * 2, \coef * 0 + 0.7) arc (90:270:0.7);
                    \filldraw [black] (\coef * 2, \coef * 0 - 0.7) circle (1pt) node[anchor=south]{$g_1$};
                    \draw[blue, thick, ->] (\coef * 2, \coef * 0 - 0.7) arc (-90:90:0.7);
                    
                    \filldraw [black] (\coef * 4 + 0.5, \coef * 1 + 0.5) circle (1pt) node[anchor=west]{$g_{21}$};
                    \filldraw [black] (\coef * 4 - 0.5, \coef * 1 + 0.5) circle (1pt) node[anchor=east]{$g_{12}$};
                    \filldraw [black] (\coef * 4 + 0.5, -\coef - 0.5) circle (1pt) node[anchor=west]{$g_{11}$};
                    \filldraw [black] (\coef * 4 - 0.5, -\coef - 0.5) circle (1pt) node[anchor=east]{$g_{22}$};
                    \draw[red, thick, ->] (\coef * 4 - 0.5, -\coef - 0.5) to[out = -50, in = -130] (\coef * 4 + 0.5, -\coef - 0.5);
                    \draw[red, ->] (\coef * 4 + 0.5, \coef + 0.5) to[out = 130, in = 50] (\coef * 4 - 0.5, \coef + 0.5);
                    \draw[blue, thick, ->] (\coef * 4 - 0.5, \coef + 0.5) to[out = -120, in = 120] (\coef * 4 - 0.5, -\coef - 0.5);
                    \draw[blue, thick, ->] (\coef * 4 + 0.5, -\coef - 0.5) to[out = 60, in = -60] (\coef * 4 + 0.5, \coef * 1 + 0.5);
                    \end{tikzpicture}}
                    \end{center}
	\end{minipage}
    \caption{Case 1: there is no valid splitting in a neighborhood of $0$}
	\label{paint:2-fail}
    \end{figure}
    
    \begin{figure}[h]
        \begin{minipage}[h]{0.49\linewidth}
		      \begin{center}
		          \scalebox{0.7}{
                    \begin{tikzpicture}
                    \def \coef {1.42857142857}
                    \filldraw [black] (0, 0) circle (2pt) node[anchor=south]{$0$};
                    \filldraw [black] (\coef * 2, \coef * 0) circle (2pt) node[anchor=south]{$z_0$};
                    \filldraw [black] (\coef * 4, -\coef * 1) circle (2pt) node[anchor=north]{$z_3$};
                    \filldraw [black] (\coef * 4, \coef * 1) circle (2pt) node[anchor=south]{$z'_3$};

                    \draw[thick, ->] (\coef * 0, \coef * 0) -- (\coef * 2, \coef * 0);
                    \draw[thick, ->] (\coef * 2, \coef * 0) -- (\coef * 4, -\coef * 1);
                    \draw[thick, ->] (\coef * 2, \coef * 0) -- (\coef * 4, \coef * 1);

                    \filldraw [black] (0, 0.7) circle (1pt) node[anchor=south]{$g$};
                    \draw[thick, ->] (0, 0.7) arc (90:450:0.7);
                    
                    \filldraw [black] (\coef * 2, \coef * 0 + 0.7) circle (1pt) node[anchor=south]{$g_2$};
                    \draw[red, thick, ->] (\coef * 2, \coef * 0 + 0.7) arc (90:270:0.7);
                    \filldraw [black] (\coef * 2, \coef * 0 - 0.7) circle (1pt) node[anchor=south]{$g_1$};
                    \draw[blue, thick, ->] (\coef * 2, \coef * 0 - 0.7) arc (-90:90:0.7);
                    
                    \filldraw [black] (\coef * 4 + 0.5, \coef * 1 + 0.5) circle (1pt) node[anchor=south]{$g_{21}$};
                    \filldraw [black] (\coef * 4 - 0.5, \coef * 1 + 0.5) circle (1pt) node[anchor=south]{$g_{12}$};
                    \filldraw [black] (\coef * 4 + 0.5, -\coef - 0.5) circle (1pt) node[anchor=north]{$g_{11}$};
                    \filldraw [black] (\coef * 4 - 0.5, -\coef - 0.5) circle (1pt) node[anchor=north]{$g_{22}$};
                    \draw[blue, thick, ->] (\coef * 4 + 0.5, -\coef - 0.5) to[out = 60, in = -60] (\coef * 4 + 0.5, \coef * 1 + 0.5);
                    \draw[red, thick, ->] (\coef * 4 + 0.5, \coef + 0.5) to[out = -120, in = 120] (\coef * 4 + 0.5, -\coef - 0.5);
                    \draw[red, thick, ->] (\coef * 4 - 0.5, -\coef - 0.5) to[out = 60, in = -60] (\coef * 4 - 0.5, \coef * 1 + 0.5);
                    \draw[blue, thick, ->] (\coef * 4 - 0.5, \coef + 0.5) to[out = -120, in = 120] (\coef * 4 - 0.5, -\coef - 0.5);
                    \end{tikzpicture}}
                    \end{center}
	\end{minipage}
	\hfill
    \begin{minipage}[h]{0.49\linewidth}
		      \begin{center}
		          \scalebox{0.7}{
                    \begin{tikzpicture}
                    \def \coef {1.42857142857}
                    \filldraw [black] (0, 0) circle (2pt) node[anchor=south]{$0$};
                    \filldraw [black] (\coef * 2, \coef * 0) circle (2pt) node[anchor=south]{$z_0$};
                    \filldraw [black] (\coef * 4, -\coef * 1) circle (2pt) node[anchor=south]{$z_3$};
                    \filldraw [black] (\coef * 4, \coef * 1) circle (2pt) node[anchor=north]{$z'_3$};

                    \draw[thick, ->] (\coef * 0, \coef * 0) -- (\coef * 2, \coef * 0);
                    \draw[thick, ->] (\coef * 2, \coef * 0) -- (\coef * 4, -\coef * 1);
                    \draw[thick, ->] (\coef * 2, \coef * 0) -- (\coef * 4, \coef * 1);

                    \filldraw [black] (0, 0.7) circle (1pt) node[anchor=south]{$g$};
                    \draw[thick, ->] (0, 0.7) arc (90:450:0.7);
                    \filldraw [black] (\coef * 2, \coef * 0 + 0.7) circle (1pt) node[anchor=south]{$g_2$};
                    \draw[red, thick, ->] (\coef * 2, \coef * 0 + 0.7) arc (90:270:0.7);
                    \filldraw [black] (\coef * 2, \coef * 0 - 0.7) circle (1pt) node[anchor=south]{$g_1$};
                    \draw[blue, thick, ->] (\coef * 2, \coef * 0 - 0.7) arc (-90:90:0.7);
                    
                    \filldraw [black] (\coef * 4 + 0.5, \coef * 1 + 0.5) circle (1pt) node[anchor=west]{$g_{21}$};
                    \filldraw [black] (\coef * 4 - 0.5, \coef * 1 + 0.5) circle (1pt) node[anchor=east]{$g_{12}$};
                    \filldraw [black] (\coef * 4 + 0.5, -\coef - 0.5) circle (1pt) node[anchor=west]{$g_{11}$};
                    \filldraw [black] (\coef * 4 - 0.5, -\coef - 0.5) circle (1pt) node[anchor=east]{$g_{22}$};
                    \draw[blue, thick, ->] (\coef * 4 + 0.5, -\coef - 0.5) to[out = 130, in = 50] (\coef * 4 - 0.5, -\coef - 0.5);
                    \draw[red, thick, ->] (\coef * 4 - 0.5, -\coef - 0.5) to[out = -50, in = -130] (\coef * 4 + 0.5, -\coef - 0.5);
                    \draw[blue, thick, ->] (\coef * 4 - 0.5, \coef + 0.5) to[out = -50, in = -130] (\coef * 4 + 0.5, \coef * 1 + 0.5);
                    \draw[red, ->] (\coef * 4 + 0.5, \coef + 0.5) to[out = 130, in = 50] (\coef * 4 - 0.5, \coef + 0.5);
                    \end{tikzpicture}}
                    \end{center}
	\end{minipage}
    \caption{Case 2: splitting of type 2 in a neighborhood of $0$}
	\label{paint:2-pairs}
    \end{figure}
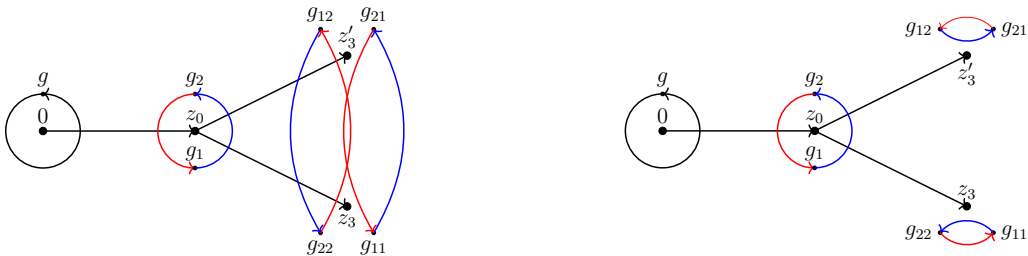

    Since all four images of any point on $\omega$ under the action of $T \circ T$ are distinct, it follows that $g_{11}, g_{12}, g_{21}, g_{22}$ are distinct. The path $\gamma_1$ is mapped to a path from the pair $[g_{11}, g_{12}]$ to the pair $[g_{21}, g_{22}]$, and the path $\gamma_2$ is mapped to a path from the pair $[g_{21}, g_{22}]$ to the pair $[g_{11}, g_{12}]$ under the action of the dynamics $T$.

    Then the image of $\gamma_1$ is either a pair of paths from $g_{11}$ to $g_{21}$ and from $g_{12}$ to $g_{22}$, or a pair of paths from $g_{11}$ to $g_{22}$ and from $g_{12}$ to $g_{21}$.
    Similarly, the image of $\gamma_2$ is either a pair of paths from $g_{21}$ to $g_{11}$ and from $g_{22}$ to $g_{12}$, or a pair of paths from $g_{21}$ to $g_{12}$ and from $g_{22}$ to $g_{11}$.

    Therefore, either the four images $g_{11}, g_{12}, g_{21}, g_{22}$ of the point $g$ under the double application of $T$ are cyclically permuted when traversing $\gamma_1$ (see Fig. \ref{paint:2-fail}) or split in two pairs of swapping points in any way except $[g_{11}, g_{12}], [g_{21}, g_{22}]$ (see Fig. \ref{paint:2-pairs}). In the first case it is impossible to split the dynamics $T \circ T$ into two continuous dynamics because the image of $g$ under both 2-valued dynamics must return to its original position when traversing $\omega$. In the second case there is a type 2 splitting of $T \circ T$ at the point $g$.
    \end{proof}

    Let us proceed to the proof of the Theorem \ref{theorem:exact_criteria}.

\begin{proof}
    If the 4-valued dynamics $T \circ T$ does not have a valid splitting in a neighborhood of any of the points $z_1, 0$, then, according to a Proposition \ref{proposition:4div2}, the dynamics $T$ is not defined by the action of a 2-valued group.

    Otherwise, according to Lemmas \ref{lemma:z1_area}, \ref{lemma:0_area}, we have two points: $\xi_1$ in a neighborhood of $z_1$, $\xi_0$ in a neighborhood of $0$ with the following properties:
    
    1) Either of the points has four pairwise distinct images under $T \circ T$
    
    2) $T \circ T$ has different types of splitting into two continuous two-valued dynamics at the points $\xi_1$ and $\xi_0$. 
    
    Since the dynamics $T$ is non-degenerate, it follows that there exists a path $I:[0, 1] \to \C$ between $\xi_1$ and $\xi_0$ such that at any point $I(t), \; t \in [0, 1]$ the 4-valued dynamics $T \circ T$ has four pairwise distinct images.

    Consider the infimum $t_{inf}$ of the set of points $t \in [0, 1]$ such that the splitting type of $T \circ T$ at the point $I(t)$ does not coincide with the type of splitting of $T \circ T$ at the point $I(0)$. This set is non-empty, as it contains $1$. Then, there exists a monotonically decreasing or monotonically increasing sequence $(t_n)$ of elements of $[0, 1]$ with limit $t_{inf}$ such that $T \circ T$ has one type of splitting at all points $t_n$ and the other type of splitting at the point $t_{inf}$. Since $I(t_n) \to I(t_{inf})$ as $t_n \to t_{inf}$, it follows from Lemma \ref{lemma:collision} that not all images of the point $I(t_{inf})$ under the dynamics $T \circ T$ are distinct. This contradicts the fact that the dynamics $T \circ T$ has four distinct images at every point of the path $I([0,1])$.

    Therefore, there is no valid splitting of $T \circ T$ into two continuous 2-valued dynamics, thus dynamics $T$ cannot be defined by the action of a 2-valued group.
\end{proof}

\subsection{Sufficiency}\ \label{section:adequacy}

Theorem \ref{theorem:criteria} provides a necessary condition for a non-degenerate 2-valued dynamics to be defined by the action of a 2-valued group. This dynamics must have the form
$$z \mapsto -p_1(z) \pm \sqrt{p_1^2(z) - \hat p^2_0(z)}$$

Since $$-p_1(z) \pm \sqrt{p_1^2(z) - \hat p^2_0(z)} = \left(\sqrt{\frac{(-p_1 - \hat p_0)(z)}{2}} \pm \sqrt{\frac{(-p_1 + \hat p_0)(z)}{2}}\right)^2,$$ it follows that the dynamics can be represented as
$$z \mapsto \left(\sqrt{\alpha(z)} \pm \sqrt{\beta(z)}\right)^2,$$

where $\alpha$ and $\beta$ are arbitrary polynomials.

However this condition is not sufficient. There exist 2-valued dynamics of this form such that these dynamics cannot be defined by the action of a 2-valued group.

\begin{prop} 
    The 2-valued dynamics
    $$T(z) = \left(c \pm \sqrt{\gamma(z)}\right)^2, \; c \in \C \setminus \{0\}, $$
    where $\gamma(z)$ has at least two different roots $z_1, z_2$, cannot be defined by the action of a 2-valued group.
\end{prop}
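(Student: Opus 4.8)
The plan is to prove, exactly as in Theorem~\ref{theorem:exact_criteria}, that the $4$-valued dynamics $T\circ T$ admits no splitting into two continuous $2$-valued dynamics; by Proposition~\ref{proposition:4div2} this gives the claim. Expanding the square yields $T(z)=\bigl(c^2+\gamma(z)\bigr)\pm 2c\sqrt{\gamma(z)}$, so in the notation $T=-p_1\pm\sqrt{p_1^2-p_0}$ we have $-p_1=c^2+\gamma$, and in particular $p_1^2-p_0=4c^2\gamma(z)$ while $p_0=(c^2-\gamma)^2$. Thus the zeros of $p_1^2-p_0$ are precisely the zeros of $\gamma$ with the same multiplicities, whereas every zero of $p_0$ has even multiplicity. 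I may assume $T$ is non-degenerate (otherwise $\gamma$ is a perfect square and $T$ factors through $\C\to\C^2\to\sym^2(\C)$); then $\gamma$ is not a perfect square and has a zero $z_1$ of odd multiplicity.

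The first special point is $z_1$ itself. Since $T(z_1)=[c^2,c^2]$ and $z_1$ is a zero of $p_1^2-p_0$ of odd multiplicity, the local situation at $z_1$ is identical to that of the point $0$ treated in Lemma~\ref{lemma:0_area} (with $z_0$ replaced by $c^2$). Hence the argument of that lemma applies verbatim and shows that, in a neighbourhood of $z_1$, any splitting of $T\circ T$ into two continuous $2$-valued dynamics is of type~$2$.

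The second special point is chosen so as to create branching at the \emph{second} application of $T$. Fix a square root $\sqrt{z_1}$; since $c\neq 0$, at least one of the numbers $(\sqrt{z_1}-c)^2$ and $(\sqrt{z_1}+c)^2$ is non-zero, call it $k$. As $\gamma$ is non-constant, there is a point $z^{*}$ with $\gamma(z^{*})=k$, and by the choice of $k$ one of the two values of $T(z^{*})$, say $w_1$, equals $z_1$, while $\gamma(z^{*})=k\neq 0$, so $z^{*}$ is not a zero of $\gamma$ and the two values $w_1\neq w_2$ of $T(z^{*})$ are distinct. Choosing $z^{*}$ so that the branch of $T$ with value $w_1$ is unramified there (a suitable preimage, which exists), a small loop around $z^{*}$ is carried by this branch to a loop winding once around $z_1$; since $z_1$ is an odd-order zero of $\gamma$, Proposition~\ref{lemma:omega} shows that the two values of $T(w_1)$ are interchanged along this loop, i.e. the two grandchildren of $z^{*}$ descending from $w_1$ are swapped, while those descending from $w_2$ either return to themselves or (if $w_2$ is also a zero of $\gamma$) are swapped among themselves. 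In every case the two swapped descendants of $w_1$ must lie in one and the same $2$-valued branch of a continuous splitting, so these descendants of the single value $w_1$ are paired together. Hence near $z^{*}$ any splitting of $T\circ T$ is of type~$1$.

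It remains to combine the two local pictures as in the proof of Theorem~\ref{theorem:exact_criteria}. By Proposition~\ref{lemma:4seasons} the set of $z$ at which $T\circ T$ has four pairwise distinct values is $\C$ minus a finite set, hence path-connected; I choose $\xi_1$ near $z_1$ and $\xi^{*}$ near $z^{*}$ inside it, with forced types $2$ and $1$ respectively, and join them by a path avoiding the finite exceptional set. Along this path the splitting type is locally constant, so it changes at some interior point, and Lemma~\ref{lemma:collision} then forces a coincidence of the four images there, contradicting the choice of the path. Therefore $T\circ T$ does not split and $T$ is not defined by the action of a $2$-valued group. The crux, and the point where the argument must depart from Theorem~\ref{theorem:exact_criteria}, is the construction of the type~$1$ point: here the zeros of $p_0$ all have even multiplicity, so $T$ itself does not branch over them and they produce no type~$1$ behaviour; instead one manufactures branching of the second iterate by pulling the odd-order branch point $z_1$ of $T$ back to an interior value $w_1=z_1$ of $T(z^{*})$, and the delicate part is to secure a preimage $z^{*}$ at which this yields a genuine interchange of the corresponding pair of grandchildren.
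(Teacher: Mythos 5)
Your proposal takes a genuinely different route from the paper, and unfortunately the route cannot work: the key existence claim is false, and the overall strategy (refuting a splitting of $T\circ T$ via Proposition \ref{proposition:4div2}) is insufficient for this Proposition. The fatal step is ``Choosing $z^{*}$ so that the branch of $T$ with value $w_1$ is unramified there (a suitable preimage, which exists).'' The winding number of the image of a small loop around $z^{*}$ equals the vanishing order of $h-z_1$ at $z^{*}$, where $h=(c+s)^2$ is the relevant branch and $s$ a local branch of $\sqrt{\gamma}$. If $z_1=0$, then $h-z_1=(c+s)^2$ with $c+s(z^{*})=0$, so this order is automatically \emph{even} for every choice of $z^{*}$; by the argument of Proposition \ref{lemma:omega} the two descendants of $w_1$ then return to themselves instead of swapping, and no type-1 point is produced. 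A first sanity check: your argument never uses the hypothesis that $\gamma$ has \emph{two} distinct roots, yet for $\gamma(z)=z$ (one root) the conclusion is false -- this is exactly the Buchstaber--Novikov dynamics of Section 3 -- so any correct proof must break for $\gamma(z)=z$, and yours breaks precisely at this existence claim. Worse, the failure persists under the Proposition's hypotheses: take $\gamma(z)=z(z-1)^2$, $c\neq 0$. It has two distinct roots and is not a perfect square, but its only odd-order zero is $z_1=0$, so your construction produces no type-1 point anywhere. And indeed none can exist, because for this $\gamma$ the $4$-valued dynamics $T\circ T$ \emph{does} split into two continuous $2$-valued dynamics: on the double cover $u^2=z$ one has $\sqrt{\gamma}=\pm(u^3-u)$, $w_{\sigma}=(c+\sigma(u^3-u))^2$, and $\sqrt{\gamma(w_{\sigma})}=(w_{\sigma}-1)(c+\sigma(u^3-u))$, so the four second-iterate values $F_{\sigma,\tau}(u)=\left(c+\tau\,(w_{\sigma}-1)\left(c+\sigma(u^3-u)\right)\right)^2$ are single-valued in $u$, and the deck transformation $u\mapsto -u$ sends $F_{\sigma,\tau}$ to $F_{-\sigma,\tau}$; hence each type-2 pair $[F_{+,\tau},F_{-,\tau}]$, $\tau=\pm$, is a well-defined continuous $2$-valued dynamics on $\C$ and their union is $T\circ T$. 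So for this admissible $\gamma$ the necessary condition of Proposition \ref{proposition:4div2} is satisfied, and no monodromy/splitting argument of the kind you propose can prove the statement. (When $\gamma$ has an odd root $z_1\notin\{0,c^2\}$ your construction can in fact be justified -- at least one of $\gamma-(\sqrt{z_1}\pm c)^2$ must have an odd-order root -- but you prove nothing of the sort, and the remaining cases are genuinely out of reach of the method.)

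The paper's proof uses strictly more than non-splitting, and this is essential. It exploits the inverse: if $T=\nu(a,\cdot)$, then $T^{-1}=\nu(\inv(a),\cdot)$ is a continuous $2$-valued dynamics such that $T^{-1}\circ T$ splits off the identity dynamics $E(z)=[z,z]$. Since $T(z_1)=T(z_2)=[c^2,c^2]$ for the two distinct roots $z_1,z_2$ of $\gamma$ (this is where that hypothesis enters), one gets $T^{-1}(c^2)=[z_1,z_2]$; then continuity of $T^{-1}$ near $c^2$, played against two distinct points $z'_3\neq z'_4$ far from $z_1,z_2$ with $T(z'_3)=T(z'_4)=[(c+\eps)^2,(3c+\eps)^2]$, forces $T^{-1}((3c+\eps)^2)$ to equal both $[z'_3,z'_3]$ and $[z'_4,z'_4]$, a contradiction. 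Note that this argument covers all cases uniformly, including the one your reduction silently discards: your opening claim that degeneracy forces $\gamma$ to be a perfect square ignores the second clause of the definition of non-degeneracy, and, more importantly, the perfect-square case (e.g.\ $\gamma(z)=z^2(z-1)^2$, which satisfies the hypotheses) is simply dropped from your proof -- there $T(z)=[f(z),g(z)]$ with $f,g$ single-valued, $T\circ T=[T\circ f,\,T\circ g]$ splits trivially, so that case too is inaccessible to your method while still being asserted by the Proposition and proved by the paper.
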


First, let us fix a small $\eps \in \C$. We will determine the precise value of $\eps$ later.

Consider the point $(c + \varepsilon)^2$ in a small neighborhood of the point $c^2$. Under the action of $T$, the points $z \in \C$ such that 
$\gamma(z) = \varepsilon^2$ or $\gamma(z) = (2c+\varepsilon)^2$ map to pairs containing the point $(c + \varepsilon)^2$. The first equation, by the continuity of $\gamma$, has at least one root in a neighborhood of $z_1$ and at least one root in a neighborhood of $z_2$. Denote these roots by $z'_1, z'_2$ respectively. The second equation has at least two roots outside these neighborhoods. Denote them by $z'_3, z'_4$. Note that by slightly perturbing $\varepsilon$ we can ensure that the points $z'_3, z'_4$ be distinct. Each of these points maps to the pair $\left[(c+\varepsilon)^2, (3c+\varepsilon)^2\right]$ under the action of $T$.

If a 2-valued dynamics $T$ can be defined by the action of a 2-valued group $A$ with a generator $a$, then there exists a continuous dynamics $T^{-1}$ defined by the action of the element $\inv(a)$. This dynamics is inverse to the dynamics $T$ in the following sence: each of the 4-valued dynamics $T \circ T^{-1}$ and $T^{-1} \circ T$ splits into a pair of continuous 2-valued dynamics, one of which is $E(z) = [z, z]$.

Since $z_1, z_2$ are mapped to $[c^2, c^2]$ it follows that the point $c^2$ must map to the pair $[z_1, z_2]$ under the action of $T^{-1}$ for $T^{-1} \circ T$ to include $E$. From the continuity of $T^{-1}$, it follows that the image of $(c+\varepsilon)^2$ lies in a neighborhood of the pair $[z_1, z_2] \in \sym^2(\C)$, and thus does not contain either $z'_3$ or $z'_4$.

Therefore $T^{-1} \circ T(z'_3) = \left[z'_1, z'_2, T^{-1}\left((3c+\varepsilon)^2\right)\right]$ and thus, since $z'_1, z'_2 \not= z'_3$, it follows that $T^{-1}((3c+\varepsilon)^2)$ should be equal to $[z'_3, z'_3]$. From similar reasons we obtain $T^{-1}((3c+\varepsilon)^2) = [z'_4, z'_4]$. This contradiction proves the proposition.

\section{Example of a non strongly invertible 2-valued dynamics that can be defined by the action of a 2-valued group}

We prove the theorem \ref{theorem:irreversible} in a more explicit form:
\begin{theorem}
    The 2-valued dynamics $T(z) = (1 \pm  \sqrt{z})^2$ is not strongly invertible, but this dynamics can be defined by the action of a 2-valued group.
\end{theorem}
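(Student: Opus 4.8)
The plan is to first put $T$ into the standard form of $\mathcal P_2(\C)$ and then to realize it explicitly as a left translation in a continuous $2$-valued group. Expanding $(1\pm\sqrt z)^2 = 1 + z \pm 2\sqrt z$, the two values of $T(z)$ are the roots of $w^2 - 2(1+z)w + (z-1)^2$, so $T$ lies in $\mathcal P_2(\C)$ with $p_1(z) = -(1+z)$ and $p_0(z) = (z-1)^2$; in particular $p_0$ is a perfect square, which is consistent with the necessary condition of Theorem \ref{theorem:criteria}. The observation driving the whole construction is that this formula is a special value of the coset $2$-valued group on $\C$ obtained from $(\C,+)$ by the involution $x\mapsto -x$: define
\[
t_1 * t_2 := \bigl[\, t_1 + t_2 + 2\sqrt{t_1 t_2},\ t_1 + t_2 - 2\sqrt{t_1 t_2}\,\bigr],
\]
the two entries being the roots of $w^2 - 2(t_1+t_2)w + (t_1 - t_2)^2$. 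Substituting $t_1=1$, $t_2=z$ gives exactly $1*z = [(1+\sqrt z)^2,(1-\sqrt z)^2] = T(z)$.

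For the first assertion I would show that $T$ is not strongly invertible by counting the incoming edges of the vertex $0$. A point $z$ satisfies $0\in T(z)$ iff $(1\pm\sqrt z)^2=0$ for some choice of branch, which forces $\sqrt z = \mp 1$ and hence $z=1$; since $T(1)=[4,0]$, the value $0$ occurs in $T(1)$ with multiplicity one and in no other $T(z)$. Thus $0$ has a single incoming edge, whereas strong invertibility would demand two, so $T$ is not strongly invertible. (The same computation shows $T$ is nevertheless weakly invertible, since each $v$ has the preimages $(1\pm\sqrt v)^2$.)

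For the second assertion I would verify that $(\C,*,0,\mathrm{id})$ is a continuous $2$-valued group and that $\nu(t,z):=t*z$ is a continuous action realizing $T$. Continuity is immediate: $t_1*t_2$ is the root multiset of a polynomial whose coefficients are polynomials in $t_1,t_2$, so $*$, and therefore $\nu$, is a continuous map into $\sym^2(\C)$; this is exactly the property absent from the discrete construction of \cite{G-YA}. Associativity is checked in the coset picture by writing $t_i=x_i^2$, so that $x_1^2 * x_2^2 = [(x_1+x_2)^2,(x_1-x_2)^2]$ and both $(t_1*t_2)*t_3$ and $t_1*(t_2*t_3)$ equal the common multiset
\[
\bigl[(x_1+x_2+x_3)^2,\,(x_1+x_2-x_3)^2,\,(x_1-x_2+x_3)^2,\,(x_1-x_2-x_3)^2\bigr].
\]
The unit is $e=0$ because $0*z$ is the double root of $(w-z)^2$, i.e. $[z,z]$, and $\inv=\mathrm{id}$ works because $z*z=[0,4z]\ni 0$. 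The two action axioms for $\nu(t,z)=t*z$ are then precisely the unit axiom and the associativity of $*$. Finally $\nu(1,z)=1*z=T(z)$, so $T$ is defined by the action of $A=(\C,*)$ with the element $a=1$.

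The conceptual content lies entirely in recognizing $T$ as left multiplication by $1$ in this coset $2$-valued group; after that, every verification reduces to elementary identities in $(\C,+)$. The only point needing care is continuity of $*$ along the ramification locus $t_1t_2=0$, where the two product values collide: this is handled by defining $*$ through the polynomial $w^2 - 2(t_1+t_2)w + (t_1-t_2)^2$ rather than through a choice of $\sqrt{t_1t_2}$, which makes continuity as a map to $\sym^2(\C)$ manifest. I do not expect any genuine obstacle beyond this bookkeeping.
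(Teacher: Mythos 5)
Your proof is correct, and it rests on the same key sign-expansion identity as the paper's, but the group you construct is genuinely different. The paper takes the discrete Buchstaber--Novikov group $\mathbb{Z}_+$ with $n*m=[\,n+m,\,|n-m|\,]$ --- whose group axioms are classical and need no verification --- and defines an action on $\C$ by $\nu(n,z)=(n\pm\sqrt z)^2$, so the only thing it checks is the action axiom $\nu(n*m,z)=\nu(n,\nu(m,z))$. You instead equip all of $\C$ with the coset 2-valued group structure $t_1*t_2=\bigl[\,t_1+t_2+2\sqrt{t_1t_2},\ t_1+t_2-2\sqrt{t_1t_2}\,\bigr]$ coming from $(\C,+)$ modulo $x\mapsto -x$, and let it act on itself by left translation; then the two action axioms of Definition \ref{definition:action} become literally the unit and associativity axioms of the group, which you must therefore verify yourself (done correctly via the lift $t_i=x_i^2$). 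The constructions are compatible: $n\mapsto n^2$ embeds $\mathbb{Z}_+$ into your $(\C,*)$ as a 2-valued subgroup, since $n^2*m^2=[(n+m)^2,(n-m)^2]$, and the paper's action $\nu(n,z)=n^2*z$ is exactly the restriction of your multiplication. What your route buys is a continuous group with a manifestly continuous action (not required by Definition \ref{definition:action}, but natural for a continuous dynamics) and a structural explanation of where the formula $(1\pm\sqrt z)^2$ comes from; what the paper's route buys is economy --- a smaller, already-known group, so that a single identity suffices --- together with the observation that even a countable discrete group can define this continuous dynamics. Your non-invertibility argument (the vertex $0$ has a single incoming edge, from $z=1$, since $T(1)=[4,0]$) is the same as the paper's.
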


\begin{proof}
One of the classic examples of 2-valued groups (see, for example, \cite{mval}) called Buchstaber–Novikov 2-valued group is the set $\mathbb{Z}_+$ of non-negative integers where product is defined as follows:
$$n * m = \left[n+m, |n-m|\right]$$

Consider the following set of 2-valued dynamics:
$$\{T_n: \; z \mapsto (n \pm \sqrt{z})^2 \; | \; n \in \mathbb{Z}\}$$

The dynamics $T = T_1$ is defined by the action of the 2-valued group $\mathbb{Z}_+$:
$$\nu(n, z) = T_n(z),$$
because
\begin{multline*}
    \nu(n * m, z) = [\nu(n+m, z), \nu(|n-m|, z)] = \\ [(n+m+\sqrt{z})^2, (n+m-\sqrt{z})^2, (n-m-\sqrt{z})^2, (n-m+\sqrt{z})^2] = \nu(n, \nu(m, z))
\end{multline*}

However, this dynamics is not strongly invertible: not every point has exactly two preimages under the action of $T_1$, taking multiplicities into account. Specifically, the point $0$ has only one simple preimage, $1$.
\end{proof}

\end{document}